\documentclass[12pt, a4]{amsart}
\usepackage{geometry}
\usepackage{amsxtra,amssymb,amsthm,amsmath,amscd,mathrsfs, epsfig, eufrak}
\usepackage{amscd, amsmath, mathrsfs, amssymb, amsthm, amsxtra, bbding, epsfig, eucal, eufrak, graphicx, latexsym, mathrsfs, mathbbol, bbold}
\usepackage[all]{xy}
\allowdisplaybreaks
\usepackage{tikz-cd}

\geometry{left=1.5cm,right=1.5cm,top=2cm,bottom=2cm}
\usepackage{tikz}
\usepackage[normalem]{ulem}
\usepackage{soul}
\usepackage{color}

\setstcolor{red}
\newcommand{\blue }[1]{{\color{blue}#1}}
\setstcolor{blue}
\makeatletter
\@namedef{subjclassname@2020}{%
	\textup{2020} Mathematics Subject Classification}
\makeatother
\usepackage{hyperref}
\hypersetup{hypertex=true,colorlinks=true,linkcolor=cyan,anchorcolor=cyan,citecolor=cyan}

\paperheight=280mm
\textwidth=165mm
\oddsidemargin=0mm
\evensidemargin=0mm

\def \i {{\rm i}}



\def \d {\,{\rm d}}
\def\re{{\Re e\,}}
\def\im{{\Im m\,}}

\def\le{\leqslant}
\def\ge{\geqslant}

\theoremstyle{plain}
\newtheorem{theorem}{Theorem}[section]
\newtheorem{proposition}{Proposition}[section]
\newtheorem{lemma}[proposition]{Lemma}

\theoremstyle{remark}

\numberwithin{equation}{section}
\addtocounter{footnote}{1}

\numberwithin{equation}{section}

\addtocounter{footnote}{1}

\begin{document}
	
	\title[On the Distribution of large values of  $|\zeta(\sigma+{\rm i}t)|$]
	{On the Distribution of large values of  $|\zeta(\sigma+{\rm i}t)|$}
	\author[Zikang Dong]{Zikang Dong}
	\address{%
		CNRS LAMA 8050\\
		Laboratoire d'analyse et de math\'ematiques appliqu\'ees\\
		Universit\'e Paris-Est Cr\'eteil\\
		61 avenue du G\'en\'eral de Gaulle\\
		94010 Cr\'eteil Cedex\\
		France
	}
	\email{zikangdong@gmail.com}

	\date{\today}
	
	\subjclass[2020]{11M06, 11N37}
	\keywords{Extreme values,
		Distribution function,
		Riemann zeta function}
	
\begin{abstract}
We investigate the distribution of large values of the Riemann zeta function $\zeta(s)$ in the strip $\frac{1}{2}<\re s<1$. For any fixed $\re s=\sigma\in(\frac{1}{2},1)$, we obtain an improved distribution function of large values of $|\zeta(\sigma+\i t)|$, holding in the same range as that given by Lamzouri.	
\end{abstract}

\maketitle

\section{Introduction}

Throughout this article, $\sigma$ will denote any fixed number in $(\frac{1}{2},1)$,
 {$\zeta(s)$ the Riemann zeta function} and $\log_j$ the $j$-th iterated logarithm. Firstly we make a brief review of the extreme values of $|\zeta(\sigma+\i t)|$ as $t$ varies. In 1928, Titchmarsh \cite{Ti28} showed that for any $\varepsilon>0$, we have
$$\limsup_{t\to\infty}\frac{\log |\zeta(\sigma+{\rm i}t)|}{{(\log t)}^{1-\sigma-\varepsilon}}=\infty.$$
In 1972, Levinson \cite{Le72} replaced $(\log t)^\varepsilon$ by $\log_2t$, by showing that for sufficiently large $T$ we have
$$\max_{t\in[0,T]}\log |\zeta(\sigma+{\rm i}t)|\gg\frac{(\log T)^{1-\sigma}}{\log_2T}.$$
In 1977, Montgomery \cite{Mon77} showed that 
\begin{align}
	\max_{t\in[0,T]}\log |\zeta(\sigma+{\rm i}t)|\ge \nu(\sigma)\frac{(\log T)^{1-\sigma}}{(\log_2T)^\sigma},\label{10}
\end{align}
where $\nu(\sigma)=\frac{1}{20}(\sigma-\frac{1}{2})^{1/2}$ unconditionally, 
and $\nu(\sigma)=\frac{1}{20}$ on assuming the Riemann hypothesis. This quantity ${(\log T)^{1-\sigma}}/{(\log_2T)^\sigma}$ is conjectured to be the true order of magnitude of 
$\max_{t\in  {[0, T]}}\log|\zeta(\sigma+\i t)|$. 
More precisely, we believe the following inequality holds:
$$
\max_{t\in [0, T]}\log |\zeta(\sigma+{\rm i}t)|
\asymp_\sigma \frac{(\log T)^{1-\sigma}}{(\log_2T)^\sigma}\cdot
$$ 
Thus, the only improvement of (\ref{10}) we could expect is to get larger values of $\nu(\sigma)$. We refer to \cite{A16,BS18}.

In 2011, applying a method of  {Granville} and Soundararajan \cite{GS06} to investigate the distribution of values of $|\zeta(1+\i t)|$, Lamzouri \cite{La2011} established the distribution of large values of $|\zeta(\sigma+\i t)|$ as $t$ varies in $[T,2T]$.	Let $T$ be sufficiently large. We define the distribution function by
\begin{equation}\label{defphi}
\Phi_T(\tau)
:= \frac {1}{ T}{\rm meas}\big\{t\in[T, 2T] : \log|\zeta(\sigma+{\rm i}t)|>\tau\big\}.
\end{equation}
Then there exists a positive constant $c(\sigma)$ such that we have
\begin{equation}\label{La2011}
\Phi_T(\tau)
= \exp\bigg(-(\tau\log^{\sigma}\tau)^{\frac{1}{1-\sigma}}
\bigg\{{\mathfrak a}_0
+ O\bigg(\frac1{\sqrt{\log\tau}} 
+ \bigg(\frac{(\tau\log\tau)^{\frac{1}{1-\sigma}}}{\log T}\bigg)^{\sigma-\frac{1}{2}}\bigg)\bigg\}\bigg)
\end{equation}
uniformly in the range $1\ll\tau\le c(\sigma)(\log T)^{1-\sigma}/\log_2T$, 
where ${\mathfrak a}_0$ will be defined later in (\ref{an2}). Despite the maximum of the range of $\tau$ being much less than (\ref{10}), the distribution function (\ref{La2011}) has more significance. If (\ref{La2011}) were to persist to the end of the viable range, then we could get a conjectural value of $\max_{t\in[T,2T]}\log |\zeta(\sigma+{\rm i}t)|$. More precisely, we have Lamzouri's conjecture (see \cite{La2011}):
$$
\max_{t\in[T,2T]}\log |\zeta(\sigma+{\rm i}t)|
= \{c(\sigma)+o(1)\} \frac{(\log T)^{1-\sigma}}{(\log_2T)^\sigma}$$
holds for $T\to\infty$, where
$$
c(\sigma) := \frac{C_0}{\sigma^{2\sigma}(1-\sigma)^{1-\sigma}}
$$
and $C_0$ will be defined in (\ref{cn2}). Note that this conjecture also implies the upper bound of $|\zeta(\sigma+\i t)|$. For more work concerning it, we refer to \cite{CS11,Cheng1999,Ford2002,Ri1967,Ti86}.

In this article, we aim to improve the distribution function (\ref{La2011}). We have a higher order expansion in the exponent, which is inspired by the work in \cite{Wu2007}. 

\begin{theorem}\label{th22}
Let $\sigma\in (\frac{1}{2}, 1)$ be a fixed real number.
	Let $\Phi_T(\tau)$ be defined in \eqref{defphi}. Then there exists a sequence of polynomials with real coefficients $\{\mathfrak{a}_n(\cdot)\}_{n\ge 0}$ with $\deg(\mathfrak{a}_n)\le n$, and a constant $c(\sigma)>0$, such that for any integer $N \ge 1$, we have
\begin{align*}
\Phi_T(\tau)
= \exp\bigg(\!-(\tau\log^{\sigma}\tau)^{\frac{1}{1-\sigma}}
\bigg\{\sum_{n=0}^{N}\frac{\mathfrak{a}_n(\log_2\tau)}{(\log\tau)^n}
+ O\bigg(\!\bigg(\frac{\log_2\tau}{\log\tau}\bigg)^{N+1} \!\!
+ \! \bigg(\frac{(\tau\log\tau)^{\frac{1}{1-\sigma}}}{\log T}\bigg)^{\sigma-\frac{1}{2}}\bigg)\bigg\}\bigg)
\end{align*}
uniformly for $T\to\infty$ and $1\ll\tau\le c(\sigma)(\log T)^{1-\sigma}/\log_2T$,
where the implied constant depend on $N$ and $\sigma$.
Especially, we have 
	\begin{align}
\mathfrak{a}_0
:= \bigg(\frac{\sigma^{2\sigma}}{C_0^{\sigma}(1-\sigma)^{2\sigma-1}}\bigg)^{1/(1-\sigma)}
\label{an2}
	\end{align}
	with $C_0$ defined in \eqref{cn2}.
\end{theorem}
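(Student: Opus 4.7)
The plan is to follow the Granville--Soundararajan--Lamzouri strategy: introduce a probabilistic model for $\log|\zeta(\sigma+\i t)|$, compute its tail distribution by saddle-point analysis of the Laplace transform, and transfer the result back to $\Phi_T(\tau)$ through high-moment matching. The new ingredient, following Wu \cite{Wu2007}, is to push the whole argument to arbitrary order in $1/\log\tau$, producing coefficients which turn out to be polynomial in $\log_2\tau$.

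Concretely, I would introduce independent random variables $\{\mathbb{X}(p)\}_p$ uniform on the unit circle, set
$$\log|\zeta(\sigma,\mathbb{X})| := \re\sum_p\log\bigl(1-\mathbb{X}(p)/p^\sigma\bigr)^{-1},$$
and write $\Phi(\tau)$ for its tail. Lamzouri's comparison argument (short Dirichlet polynomial approximation, matching of high moments against the random model, and zero-density control of the exceptional $t$-set) can be imported essentially unchanged: in the range considered it yields
$$\log\Phi_T(\tau) = \log\Phi(\tau) + O\Bigl((\tau\log^{\sigma}\tau)^{1/(1-\sigma)}\bigl((\tau\log\tau)^{1/(1-\sigma)}/\log T\bigr)^{\sigma-\frac12}\Bigr),$$
which accounts for the second error term in the theorem.

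The first analytic task is to obtain an asymptotic expansion to all orders of the Laplace transform
$$L(s) := \mathbb{E}\bigl[\me^{s\log|\zeta(\sigma,\mathbb{X})|}\bigr] = \prod_p \mathbb{E}\bigl[|1-\mathbb{X}(p)/p^\sigma|^{-s}\bigr].$$
I would split the Euler product at a cutoff $y \asymp s^{1/\sigma}$. For $p > y$, Taylor-expand the local factor in $p^{-\sigma}$ and sum termwise using Mertens-type estimates; for $p\le y$, exploit a uniform asymptotic of the local factor (via Laplace's method on its $\theta$-integral representation, or via the hypergeometric identity $\mathbb{E}[|1-\mathbb{X}(p)/p^\sigma|^{-s}] = {}_2F_1(s/2,s/2;1;p^{-2\sigma})$) and sum using the prime number theorem endowed with its full logarithmic expansion. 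The target is a formula of the shape
$$\log L(s) = \frac{s^{1/\sigma}}{\log s}\sum_{n=0}^{N}\frac{Q_n(\log_2 s)}{(\log s)^n} + O\!\biggl(\frac{s^{1/\sigma}}{\log s}\Bigl(\frac{\log_2 s}{\log s}\Bigr)^{N+1}\biggr)$$
for certain polynomials $Q_n$ of degree $\le n$, with $Q_0$ the constant $C_0$ of \eqref{cn2}.

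The final step is the saddle-point inversion. Solving $(\log L)'(s_0)=\tau$ asymptotically, one first obtains $s_0 \asymp (\tau\log\tau)^{\sigma/(1-\sigma)}$, and iterating produces $\log s_0 = (\sigma/(1-\sigma))(\log\tau + \log_2\tau + O(1))$; consequently $s_0(\tau)$ itself admits an expansion in descending powers of $\log\tau$ with polynomial-in-$\log_2\tau$ coefficients. Substituting into $\log\Phi(\tau) = -s_0\tau + \log L(s_0) + O(\log s_0)$, expanding and regrouping, yields the polynomial coefficients $\mathfrak{a}_n(\log_2\tau)$, with the order-zero coefficient reproducing \eqref{an2} by a direct computation. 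The main obstacle I anticipate is precisely in this last step: the composition of two nested asymptotic expansions (that for $\log L$ and the inverse of the saddle-point map) must be organized so as to preserve the polynomial-in-$\log_2\tau$ structure at every order $n\le N$, with all accumulated errors collapsing into the single clean term $(\log_2\tau/\log\tau)^{N+1}$ rather than mixing to produce something larger. A secondary technical point is verifying the uniformity of the small-prime local-factor asymptotic across the full range $p\le y\asymp s^{1/\sigma}$, so that summation over primes does not leak an error exceeding $(\log_2 s/\log s)^{N+1}$.
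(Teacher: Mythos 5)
Your approach matches the paper's closely: a high-order expansion of the moments (equivalently, the Laplace transform) of the short Euler product $\zeta(\sigma+\i t;y)$ as in Proposition~\ref{th1}, a concentration-of-measure argument equivalent to your saddle-point inversion (Proposition~\ref{prop4.1}), and a transfer to $\Phi_T(\tau)$ via the zero-density approximation lemma (Lemma~\ref{l145}). The only cosmetic discrepancy is that the paper's moment expansion has purely constant coefficients $C_n$, with the polynomial-in-$\log_2\tau$ structure emerging only from the iterative change of variables $k\mapsto\tau$ in Lemma~\ref{l822} --- precisely the obstacle you flagged, though you slightly misplace it by allowing your $Q_n$ to already be nonconstant at the Laplace-transform stage.
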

The main new ingredient for the proof of Theorem \ref{th22} is Proposition \ref{prop4.1} below,
which gives a better approximation of the distribution function of the short Euler products:
\begin{equation}\label{def:PhiTtauy}
\Phi_T(\tau; y) := \frac{1}{T}  {\rm meas}\big\{t\in[T,2T]:\log|\zeta(\sigma+{\rm i}t; y)|>\tau\big\},
\end{equation}	
where
$$
\zeta(\sigma+\i t;y):=\prod_{p\le y}\bigg(1-\frac{1}{p^{\sigma+\i t}}\bigg)^{-1}.
$$
We refer to \cite{MM20} for similar work on $L$-functions attached to cusp forms.

\vskip 6mm
\section{ { Preliminary lemmas}}

Firstly, we will show the relationship between sums attached to the divisor function and the Bessel function by two asymptotic formulas. These will be used in the progress of calculating the moments of the short Euler products for the Riemann zeta function and the Dirichlet $L$-functions. One should pay attention that here $k$ is not necessarily an integer.

The modified Bessel function $I_0(t)$ of order $0$ is defined by
\begin{equation}\label{def:Bessel0}
I_0(t) := \int_0^1\exp(t\cos(2\pi\theta))\d\theta=\sum_{n=0}^{\infty} \frac{(t/2)^{
	 {2n}}}{(n!)^2}\cdot
\end{equation}
It's not difficult to see that
\begin{align}
\log I_0(t) 
& \ll t^2
\quad
(0\le t<1),
\label{23}
\\\noalign{\vskip 0,5mm}
\log I_0(t)
& \ll t 
\quad
(t\ge 1),
\label{24}
\\\noalign{\vskip 0,5mm}
(\log I_0(t))' 
& \ll {\rm min}\{1,|t|\}.
\label{25}
\end{align}

\begin{lemma}\label{l1}
Let $\sigma\in (\frac{1}{2}, 1)$ be a fixed real number.
For any prime $p$ and positive number $k$, we have
		\begin{align}
			\sum_{\nu\ge 0} \frac{d_{k/2}(p^{\nu})^2}{p^{2\nu\sigma}}
			& = I_0\bigg(\frac{k}{p^\sigma}\bigg) \exp\bigg\{O_{\sigma}\bigg(\frac k{p^{2\sigma}}\bigg)\bigg\},
			\label{21}
			\\\noalign{\vskip 0,5mm}
			\sum_{\nu\ge 0} \frac{d_{k/2}(p^{\nu})^2}{p^{2\nu\sigma}}
			& =\exp\bigg\{O_{\sigma}\bigg(\frac k{p^{\sigma}}\bigg)\bigg\}
			\qquad
			(p\le k^{ {1/\sigma}}),
			\label{22}
		\end{align}
		where  {the implied constants depend on $\sigma$ only}.
\end{lemma}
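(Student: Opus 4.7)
The plan is to reduce both estimates to a single integral identity. Since $k>0$, the generalized binomial theorem gives $(1-x)^{-k/2} = \sum_{\nu\ge 0}d_{k/2}(p^\nu)x^\nu$ for $|x|<1$. Substituting $x = p^{-\sigma}e^{2\pi\i\theta}$, squaring the modulus, and integrating over $\theta\in[0,1]$ eliminates off-diagonal terms by orthogonality, leaving the Parseval-type identity
\begin{equation*}
\sum_{\nu\ge 0}\frac{d_{k/2}(p^\nu)^2}{p^{2\nu\sigma}} = \int_0^1 \bigl(1 - 2p^{-\sigma}\cos(2\pi\theta) + p^{-2\sigma}\bigr)^{-k/2}\,\d\theta.
\end{equation*}
This will serve as the common starting point for both parts.

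For the rough bound \eqref{22}, I would simply majorize the integrand pointwise by its value at $\theta=0$, namely $(1-p^{-\sigma})^{-k}$. Since $-\log(1-p^{-\sigma})\ll_\sigma p^{-\sigma}$ uniformly in primes $p\ge 2$, this delivers the upper bound $\exp(O_\sigma(k/p^\sigma))$; the matching lower bound is immediate from the $\nu=0$ term, which contributes $1$ to the sum.

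For the refined bound \eqref{21}, I would Fourier-expand the logarithm of the integrand via the classical identity $\log(1-2r\cos\phi+r^2) = -2\sum_{n\ge 1}r^n\cos(n\phi)/n$, valid for $|r|<1$. Applied with $r = p^{-\sigma}$, this recasts the integrand as
\begin{equation*}
\exp\!\bigg(\frac{k\cos\phi}{p^\sigma} + \frac{k\cos(2\phi)}{2p^{2\sigma}} + O_\sigma\!\Big(\frac{k}{p^{3\sigma}}\Big)\bigg),
\end{equation*}
where the $O_\sigma$-term bounds the tail $\sum_{n\ge 3}p^{-n\sigma}\cos(n\phi)/n$ by a geometric series of ratio $p^{-\sigma}\le 2^{-\sigma}<1$. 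Since the last two summands in the exponent are uniformly $O_\sigma(k/p^{2\sigma})$ in $\phi$, one may factor out $\exp(O_\sigma(k/p^{2\sigma}))$ as a two-sided multiplicative envelope; the remaining integral $\int_0^1 e^{k\cos(2\pi\theta)/p^\sigma}\,\d\theta$ equals $I_0(k/p^\sigma)$ by the definition \eqref{def:Bessel0}, giving \eqref{21}.

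The main technical point is keeping the Fourier tail estimate and the factorization of the $k/p^{2\sigma}$-envelope uniform in $p$ and $k$. This is handled automatically by the condition $\sigma > \tfrac{1}{2}$: for any prime $p$, $p^{-\sigma}$ lies in $(0, 2^{-\sigma}]$, a set bounded away from $1$ by an amount depending only on $\sigma$, so all geometric tails converge with constants depending only on $\sigma$. No case split between small and large $p$ is needed.
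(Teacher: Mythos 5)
Your argument is correct and, for \eqref{21}, follows the paper's route essentially verbatim: both start from the Parseval identity $\sum_{\nu\ge 0}d_{k/2}(p^\nu)^2 p^{-2\nu\sigma}=\int_0^1\big(1-2p^{-\sigma}\cos(2\pi\theta)+p^{-2\sigma}\big)^{-k/2}\,\mathrm{d}\theta$, and your Fourier-cosine expansion of $\log(1-2r\cos\phi+r^2)$ is the real-variable form of the complex expansion $\log(1-z)=-z+O(|z|^2)$ that the paper invokes; your phrasing has the minor advantage of making the uniformity in $p\ge 2$ transparent via $p^{-\sigma}\le 2^{-\sigma}<1$. The only genuine divergence is in \eqref{22}: the paper deduces it from the already-established \eqref{21} together with the bound $\log I_0(t)\ll t$ for $t\ge1$ (this is precisely where the hypothesis $p\le k^{1/\sigma}$ enters), whereas you bound the integrand pointwise by its maximum $(1-p^{-\sigma})^{-k}$ and take the $\nu=0$ term for the trivial lower bound. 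Your route is slightly more self-contained, requires no appeal to \eqref{21}, and in fact yields \eqref{22} with no restriction on $p$; nothing is lost, since outside that range \eqref{21} is the sharper statement anyway.
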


\begin{proof}
	See also of \cite[Lemma 4]{GS06}. 
Writing ${\rm e}(\theta):={\rm e}^{2\pi\i\theta}$, then 
\begin{align*}
\bigg|1-\frac{{\rm e}(\theta)}{p^\sigma}\bigg|^{-k}
& = \bigg(1-\frac{{\rm e}(\theta)}{p^\sigma}\bigg)^{-k/2} 
\bigg(1-\frac{{\rm e}(-\theta)}{p^\sigma}\bigg)^{-k/2}
\\
& = \sum_{\nu\ge0} \sum_{\nu'\ge0} 
\frac{d_{k/2}(p^\nu)d_{k/2}(p^{\nu'}){\rm e}((\nu-\nu')\theta)}{p^{(\nu+\nu')\sigma}}\cdot
\end{align*}
Thus we can derive that
\begin{align*}
\sum_{\nu\ge 0} \frac{d_{k/2}(p^\nu)^2}{p^{2\nu\sigma}} 
& = \int_0^1\bigg|1-\frac{{\rm e}(\theta)}{p^\sigma}\bigg|^{-2(k/2)}{\rm d}\theta
\\
& =\int_0^1\bigg(1-\frac{2\cos(2\pi\theta)}{p^\sigma}+\frac{1}{p^{2\sigma}}\bigg)^{-k/2}{\rm d}\theta
\\\noalign{\vskip 0,5mm}
& =\int_0^1\exp\bigg(-\frac{k}{2}\log\bigg(1-\frac{2\cos(2\pi\theta)}{p^\sigma}+\frac{1}{p^{2\sigma}}\bigg)\bigg){\rm d}\theta.
\end{align*}
This implies \eqref{21} thanks to the formula $\log(1+t)=t+O(t^2)\;(|t|\le 2^{-1/2})$,
and \eqref{22} follows from \eqref{21} and \eqref{24} immediately.
\end{proof}

\begin{lemma}\label{l122}
We have
$$
\sum_{p\le x}\frac{1}{p^\sigma}
= \frac{x^{ {1-\sigma}}}{(1-\sigma)\log x}+ O\bigg(\frac{x^{ {1-\sigma}}}{(1-\sigma)^{ {2}}(\log x)^2}\bigg)
$$
uniformly for $x\to\infty$ and  {$\frac{1}{2}<\sigma<1$,
where the implied constant is absolute}.
\end{lemma}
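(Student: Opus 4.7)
The plan is to obtain the asymptotic by Abel summation against the prime counting function, using the prime number theorem in the form $\pi(t)=t/\log t+O(t/(\log t)^2)$ (valid for $t\ge 2$, with an absolute implied constant). Write
$$
\sum_{p\le x}\frac{1}{p^\sigma}=\frac{\pi(x)}{x^\sigma}+\sigma\int_2^x\frac{\pi(t)}{t^{\sigma+1}}\,{\rm d}t.
$$
The boundary term contributes $x^{1-\sigma}/\log x+O(x^{1-\sigma}/(\log x)^2)$, and the PNT error term in the integral contributes at most $O(\int_2^x t^{-\sigma}(\log t)^{-2}\,{\rm d}t)$, which by splitting the interval at $\sqrt{x}$ is $O(x^{1-\sigma}/((1-\sigma)(\log x)^2))$.

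The remaining main integral is $\sigma\int_2^x\,{\rm d}t/(t^\sigma\log t)$. The plan here is to perform one further integration by parts, taking $u=1/\log t$ and $dv=t^{-\sigma}\,{\rm d}t$, to extract a closed-form contribution $\sigma x^{1-\sigma}/((1-\sigma)\log x)$ plus a correction $(\sigma/(1-\sigma))\int_2^x t^{-\sigma}/(\log t)^2\,{\rm d}t$. The correction is again handled by splitting at $\sqrt{x}$: on $[\sqrt x,x]$ one has $\log t\ge(\log x)/2$, giving a contribution $O(x^{1-\sigma}/((1-\sigma)^2(\log x)^2))$; on $[2,\sqrt x]$ the factor $(\log t)^{-2}$ is bounded so the contribution is $O(x^{(1-\sigma)/2}/(1-\sigma))$, which is of lower order. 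Combining the boundary term $x^{1-\sigma}/\log x$ with $\sigma x^{1-\sigma}/((1-\sigma)\log x)$ collapses to the claimed main term $x^{1-\sigma}/((1-\sigma)\log x)$.

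The main issue to watch is the uniformity in $\sigma$ near $1$: the antiderivative $t^{1-\sigma}/(1-\sigma)$ forces the $(1-\sigma)^{-1}$ at each integration by parts, and a second such step produces the $(1-\sigma)^{-2}$ visible in the announced error. As long as the implied constant in PNT is absolute, every estimate above is absolute in $\sigma\in(\tfrac12,1)$ once these $(1-\sigma)^{-j}$ factors have been made explicit; no uniformity issues arise from the lower endpoint $\sigma>\tfrac12$ since $(1-\sigma)^{-1}\le 2$ there. This is the only real obstacle, and it is essentially bookkeeping.
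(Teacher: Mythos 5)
The paper gives no in-house proof of this lemma; it simply cites equation~(2.1) of Lamzouri together with three other references, all of which obtain the estimate exactly the way you do, via partial summation against $\pi(t)$, the prime number theorem with error $O(t/(\log t)^2)$, and one further integration by parts on $\int_2^x t^{-\sigma}(\log t)^{-1}\,\mathrm{d}t$. Your calculation is the right one: the boundary piece $x^{1-\sigma}/\log x$ plus the extracted $\sigma x^{1-\sigma}/((1-\sigma)\log x)$ collapses to $x^{1-\sigma}/((1-\sigma)\log x)$, and the $[\sqrt x,x]$ part of the correction integral supplies the $(1-\sigma)^{-2}(\log x)^{-2}$ relative error.

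Two points of bookkeeping deserve correction. First, the parenthetical ``$(1-\sigma)^{-1}\le 2$'' is backwards: for $\sigma\in(\tfrac12,1)$ one has $(1-\sigma)^{-1}\ge 2$, and it is the upper endpoint $\sigma\to1$, not the lower one, where the factor becomes unbounded. Second, the assertion that the $[2,\sqrt x]$ pieces (together with the $t=2$ boundary term of the integration by parts, which is $O(1/(1-\sigma))$ and which you omit) are ``of lower order'' than $x^{1-\sigma}/((1-\sigma)^2(\log x)^2)$ requires $(\log x)^2\ll x^{(1-\sigma)/2}$, i.e.\ $(1-\sigma)\log x\gg\log_2x$. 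When $1-\sigma\asymp1/\log x$ this fails, and so does the lemma's asymptotic itself: the left side is then $\asymp\log_2x$ while the right side is $O(1)$. So your proof is as uniform as the statement permits, and for the fixed $\sigma$ used throughout the paper none of this causes trouble, but the lemma's claim of an absolute implied constant tacitly presupposes $(1-\sigma)\log x$ large, which is worth stating.
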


\begin{proof}
	This is equation (2.1) of \cite{La2011}. See also \cite[Lemma 6]{BS18},  \cite[Lemma 3.1]{Nor92}, and \cite[Lemma 3.3]{BG13}.
\end{proof}

We need to approximate Riemann zeta function $\zeta(s)$ by its short Euler product. 
The following lemma shows that when $\zeta(s)$ has no zero in a good region, 
it can be approximated well by its short Euler product.

\begin{lemma}\label{l142} 
 {Let $\sigma_0\in [\frac{1}{2}, 1)$ be a fixed number.}
Let $y\ge 2$ and $|t|\ge  y+3$ be real numbers and suppose that the rectangle 
$\{z : \sigma_0<\re z\le 1 \; \text{and} \;\, |\im z-t|\le y+2\}$ is free of zeros of $\zeta(z)$. Then for any $\sigma_0<\sigma\le 2$ and $|\xi-t|\le y$, we have
$$
|\log \zeta(\sigma+{\rm i}\xi)|
\ll (\log |t|)\log({\rm e}/(\sigma-\sigma_0)).
$$
		Further for $\sigma_0<\sigma\le 1$, we have
		$$\log \zeta(\sigma+{\rm i}t)=\sum_{n=2}^y \frac{\Lambda(n)}{n^{\sigma+{\rm i}t}\log n}+O\bigg(\frac{\log |t|}{(\sigma_1-\sigma_0)^2}y^{\sigma_1-\sigma}\bigg),$$
		where $\sigma_1 := {\rm min}\big(\sigma_0+(\log y)^{-1},\frac{1}{2}(\sigma+\sigma_0)\big)$. 
 {The implied constants depend on $\sigma_0$ at most.}
\end{lemma}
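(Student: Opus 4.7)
The lemma has two linked parts: the first is a pointwise bound on $|\log\zeta|$ in the zero-free rectangle, and the second is a truncation of the Dirichlet series for $\log\zeta$. I would prove the first by a convexity argument for holomorphic functions (Borel--Carath\'eodory, upgraded via Hadamard's three-circle theorem), and the second by Perron's formula with a contour shift, using the first part to control $\log\zeta$ on the shifted contour.

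\textbf{Part 1.} In the zero-free rectangle, $\log\zeta(z)$ is a single-valued holomorphic function. First I would establish the crude a priori bound $|\zeta(\sigma'+i\xi')| \ll |t|$ valid uniformly for $\sigma' \geq \sigma_0$ and $|\xi'-t| \leq y+2$, coming from the functional equation and the Phragm\'en--Lindel\"of convexity principle. Taking logarithms, $\re \log\zeta(z) \leq \log|t| + O(1)$ throughout the rectangle. At the anchor point $z_0 = 2+it$ the absolutely convergent Dirichlet series gives $|\log\zeta(z_0)| = O(1)$. The Borel--Carath\'eodory theorem then transfers the real-part bound into a bound on the modulus; applied to a disk centered at $z_0$ of radius slightly smaller than $2-\sigma_0$ and compared with interior values at distance $2-\sigma$, it yields $|\log\zeta(\sigma+i\xi)| \ll \log|t|/(\sigma-\sigma_0)$. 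To upgrade the reciprocal factor to the claimed $\log(e/(\sigma-\sigma_0))$, I would iterate Borel--Carath\'eodory along a geometric sequence of circles (equivalently, invoke Hadamard's three-circle inequality), which replaces the linear factor by the logarithm of the ratio of radii.

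\textbf{Part 2.} I would use the Perron-type identity
$$
\frac{1}{2\pi i}\int_{c-iy}^{c+iy}\log\zeta(s+w)\,\frac{y^w}{w}\,dw = \sum_{n\leq y}\frac{\Lambda(n)}{n^s\log n} + E_1,
$$
with $c = 1/\log y$, so that $\re(s+w) > 1$ and the defining series for $\log\zeta(s+w)$ converges absolutely term-by-term; $E_1$ is the usual Perron truncation error. Next I would shift the contour to the vertical line $\re w = \sigma_1 - \sigma < 0$. The closed rectangle $[\sigma_1-\sigma,c]+i[-y,y]$ stays inside the hypothesized zero-free region, so Part 1 applies and the shift is legal. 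The simple pole of $w^{-1}$ at $w=0$ contributes the residue $\log\zeta(s)$, which is the announced main term. For the error, Part 1 gives $|\log\zeta(\sigma_1+i(t+u))| \ll \log|t|/(\sigma_1-\sigma_0)$ on the new vertical line, and integrating against $y^{\sigma_1-\sigma}/|w|$ produces the stated $(\log|t|/(\sigma_1-\sigma_0)^2)\,y^{\sigma_1-\sigma}$ after handling the $1/|w|$ weight near $u=0$.

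\textbf{Main obstacle.} The principal technical work lies in Part 2: carefully bounding the two horizontal connecting segments at $\im w = \pm y$ (which require Part 1 applied near the very top and bottom of the zero-free region), accounting for the Perron truncation error $E_1$, and treating the $1/|w|$ singularity on the shifted vertical contour. The specific choice $\sigma_1 = \min(\sigma_0+(\log y)^{-1},(\sigma+\sigma_0)/2)$ is tuned so that $y^{\sigma_1-\sigma}$ is as small as possible subject to $\sigma_1-\sigma_0 \geq (\log y)^{-1}$, keeping the Part 1 bound on the shifted contour from exploding.
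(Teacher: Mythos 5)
The paper supplies no proof of its own here; it cites \cite[Lemma~1]{GS06} directly, and the argument there is precisely the two-step plan you describe: Borel--Carath\'eodory upgraded by a Hadamard three-circle (or iterated Borel--Carath\'eodory) argument for the pointwise bound, then a truncated Perron integral shifted to the left, picking up the residue $\log\zeta(s)$ at $w=0$ and bounding the remaining contour via Part~1. So your strategy is essentially the same as the cited source.

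Two details in your sketch need repair, though neither changes the architecture. First, in Part~1 you anchor Borel--Carath\'eodory at $z_0 = 2+it$; but the conclusion must hold for every $\xi$ with $|\xi - t|\le y$, and for $|\xi - t|$ near $y$ a disk about $2+it$ reaching $\sigma+i\xi$ would need radius comparable to $y$ and would leave the hypothesised zero-free rectangle. One should instead anchor at $2+i\xi$ for each $\xi$, so the comparison disk has radius less than $2-\sigma_0\le 2\le y+2-|\xi - t|$ and remains admissible. Second, the choice $c=1/\log y$ does not ensure $\re(s+w)=\sigma+c>1$ on the initial contour unless $\sigma$ is already within $1/\log y$ of $1$; with $\sigma$ as small as just above $\sigma_0\ge\frac12$, the Dirichlet series for $\log\zeta(s+w)$ does not converge absolutely at $\re w=c$ and the termwise Perron step is not justified. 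Take $c=1-\sigma+1/\log y$ (or $c=2-\sigma$) instead; with that choice the horizontal segments contribute $\ll y^{-\sigma}\log|t|/(\sigma_1-\sigma_0)$ and the Perron truncation error is $\ll y^{-\sigma}\log_2 y$, both dominated by the claimed bound $\log|t|\,y^{\sigma_1-\sigma}/(\sigma_1-\sigma_0)^2$, which (as you correctly observe) is driven by the shifted vertical line.
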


\begin{proof}
	See \cite[Lemma 1]{GS06}.
\end{proof}

With the help of Lemma \ref{l142}, 
as well as a result of zero density estimate for the Riemann zeta-function $\zeta(s)$, 
we can approximate $\zeta(s)$ by its short Euler product mostly often. 
Of course, here the short Euler product is a bit ``long", that means $y$ needs to be relatively large. 
Otherwise, the error term will be too large to make sense.

\begin{lemma}\label{l3}
 {Let $\sigma\in (\frac{1}{2}, 1)$ be a fixed number and 
$0<a(\sigma)<\frac{1}{2}(\sigma-\frac{1}{2})<2/(\sigma-\frac{1}{2})<A(\sigma)$.
Then for 
$$
T\to\infty
\quad\text{and}\quad
(\log T)^{A(\sigma)}\le y\le T^{a(\sigma)}
$$ 
the asymptotic formula
$$
\log\zeta(\sigma+{\rm i}t)=\sum_{n=2}^y \frac{\Lambda(n)}{n^{\sigma+{\rm i}t}\log n}
+ O\big(y^{-\frac{1}{2}(\sigma-\frac{1}{2})}(\log y)^2\log T\big)
$$
holds for all $t\in [T, 2T]$ except for a set of measure at most $O(T^{1-\frac{1}{2}(\sigma-\frac{1}{2})}y(\log T)^5)$,
where the implied constants depend on $\sigma$ at most}.
\end{lemma}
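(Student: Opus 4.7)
The proof will combine Lemma \ref{l142} with a classical zero-density estimate for $\zeta$. I choose $\sigma_0:=\tfrac12(\sigma+\tfrac12)$, so that $\sigma-\sigma_0=\tfrac12(\sigma-\tfrac12)$ and $\sigma_0\in(\tfrac12,\sigma)$. Declare $t\in[T,2T]$ \emph{good} if the rectangle $R_t:=\{z:\sigma_0<\re z\le 1,\,|\im z-t|\le y+2\}$ is free of zeros of $\zeta$, and \emph{bad} otherwise. For every good $t$, Lemma \ref{l142} applies at this $\sigma_0$ and yields the desired Euler-product approximation; the proof then reduces to two verifications: (i) the error term of Lemma \ref{l142} collapses to the shape claimed, and (ii) the bad set $B\subset[T,2T]$ has measure $O(T^{1-\frac12(\sigma-\frac12)}y(\log T)^5)$.

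For (i), the lower bound $y\ge(\log T)^{A(\sigma)}$ guarantees $(\log y)^{-1}<\tfrac14(\sigma-\tfrac12)=\tfrac12(\sigma-\sigma_0)$ once $T$ is large, so the minimum in Lemma \ref{l142} is realized by $\sigma_1=\sigma_0+(\log y)^{-1}$. Then $\sigma_1-\sigma=-\tfrac12(\sigma-\tfrac12)+(\log y)^{-1}$ and $(\sigma_1-\sigma_0)^{-2}=(\log y)^2$, so the error term becomes
$$\frac{\log|t|}{(\sigma_1-\sigma_0)^2}\,y^{\sigma_1-\sigma}\ll(\log T)(\log y)^2\,y^{-\frac12(\sigma-\frac12)},$$
which is exactly the stated error. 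For (ii), the set $B$ is covered by the intervals $[\gamma-y-2,\gamma+y+2]$ as $\rho=\beta+\i\gamma$ ranges over zeros of $\zeta$ with $\beta>\sigma_0$ and, say, $\tfrac{T}{2}\le\gamma\le 3T$ (using $y\le T^{a(\sigma)}<T/4$), each of length at most $2y+4$. Writing $N(\alpha,U)$ for the usual zero-counting function, this gives $|B|\ll y\,N(\sigma_0,3T)$. Applying Ingham's density estimate $N(\alpha,U)\ll U^{3(1-\alpha)/(2-\alpha)}(\log U)^5$ at $\alpha=\sigma_0$, the resulting exponent of $T$ is $3(3-2\sigma)/(7-2\sigma)$; the inequality $3(3-2\sigma)/(7-2\sigma)\le 1-\tfrac12(\sigma-\tfrac12)$ reduces after clearing denominators to $1\le 4\sigma^2$, which holds for $\sigma>\tfrac12$. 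Hence $|B|\ll T^{1-\frac12(\sigma-\frac12)}y(\log T)^5$, as claimed.

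The argument is essentially two routine verifications and presents no serious obstacle. What does require care is the choice of the auxiliary parameter $\sigma_0$: taking it too close to $\tfrac12$ inflates the error in Lemma \ref{l142}, while taking it too close to $\sigma$ worsens the zero-count. The value $\sigma_0=\tfrac12(\sigma+\tfrac12)$ is selected precisely so that the two exponents fall into the shape demanded by the statement, and the only non-trivial check is the algebraic reduction of Ingham's exponent to the target $1-\tfrac12(\sigma-\tfrac12)$, which, pleasingly, holds with equality at $\sigma=\tfrac12$ and strict inequality elsewhere on $(\tfrac12,1)$.
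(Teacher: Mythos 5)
Your proof is correct and takes essentially the same route as the paper. The paper's own ``proof'' of Lemma~\ref{l3} is only a pointer to \cite[Lemma 2]{GS06} (with a cosmetic replacement of $(\log T)^3$ by $(\log y)^2\log T$ in the error term), and what GS06 do there is precisely what you reconstruct: fix an auxiliary $\sigma_0\in(\tfrac12,\sigma)$, feed the zero-free rectangle hypothesis into Lemma~\ref{l142} to get the Euler-product approximation on the ``good'' set, and bound the complement by covering it with intervals of length $O(y)$ around ordinates of zeros with $\beta>\sigma_0$, then invoking Ingham's density estimate $N(\alpha,U)\ll U^{3(1-\alpha)/(2-\alpha)}(\log U)^5$. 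Your arithmetic is sound: with $\sigma_0=\tfrac12(\sigma+\tfrac12)$ one has $\sigma_1=\sigma_0+(\log y)^{-1}$ once $\log y$ is large (guaranteed by $y\ge(\log T)^{A(\sigma)}$), so the Lemma~\ref{l142} error is $\ll y^{-\frac12(\sigma-\frac12)}(\log y)^2\log T$; and your reduction of $3(3-2\sigma)/(7-2\sigma)\le 1-\tfrac12(\sigma-\tfrac12)$ to $1\le 4\sigma^2$ is right (with equality at $\sigma=\tfrac12$), so Ingham at $\alpha=\sigma_0$ indeed gives $|B|\ll T^{1-\frac12(\sigma-\frac12)}y(\log T)^5$, in fact something slightly better. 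The only small cosmetic point worth making explicit is where the upper restriction $y\le T^{a(\sigma)}$ enters: you use it (correctly) to justify $y+2<T/2$, but it also serves to make the exceptional-set bound $o(T)$, which is the reason the paper imposes $a(\sigma)<\tfrac12(\sigma-\tfrac12)$.
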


\begin{proof}
	This is essentially \cite[Lemma 2]{GS06} while we restrict $(\log T)^{A(\sigma)}\le  y\le T^{a(\sigma)}$ such that both the error term $O(y^{  {-\frac{1}{2}(\sigma-\frac{1}{2})}}(\log y)^2\log T)$ and the measure $T^{  {1-\frac{1}{2}(\sigma-\frac{1}{2})}}y(\log T)^5$ make sense. We replace the term $O(y^{  {-\frac{1}{2}(\sigma-\frac{1}{2})}}(\log T)^3)$ in \cite[Lemma 2]{GS06} by $O(y^{  {-\frac{1}{2}(\sigma-\frac{1}{2})}}(\log y)^2\log T)$. The proof has no difference from that of \cite[Lemma 2]{GS06}.
\end{proof}

In order to approximate $\zeta(s)$ by its ``shorter" Euler product, 
we need the following moment evaluation for the sum over complex power of primes 
between two large numbers $y$ and $z$, where $y$ can be relatively smaller.

\begin{lemma}\label{l4} 
 {Let $\sigma\in (\frac{1}{2}, 1)$ be a fixed number.} Then we have
$$
\frac1 T\int_T^{2T} \bigg|\sum_{y\le p\le z}\frac{1}{p^{\sigma+{\rm i}t}}\bigg|^{2k} {\rm d}t
\ll \bigg(k\sum_{y\le p\le z}\frac{1}{p^{2\sigma}}\bigg)^k + \frac{1}{T^  {\frac13}}
$$
for $2\le y\le z$ and all integers $1\le k\le (\log T)/(3\log z)$,
 {where the implied constant depends on $\sigma$ at most.} 
\end{lemma}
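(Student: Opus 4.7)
The plan is to reduce the $2k$-th moment to an $L^2$-mean of a Dirichlet polynomial of length at most $T^{1/3}$, apply a mean value theorem for Dirichlet polynomials (of Montgomery--Vaughan type), and then bound the resulting diagonal arithmetic sum by a clean combinatorial inequality. Setting $P(s) := \sum_{y\le p\le z} p^{-s}$ and $S := \sum_{y\le p\le z} p^{-2\sigma}$, I would first expand
$$P(s)^k = \sum_{n} \frac{a_k(n)}{n^s},$$
where $a_k(n)$ counts ordered $k$-tuples $(p_1,\dots,p_k)$ of primes in $[y,z]$ with $p_1\cdots p_k = n$. For $n = p_1^{e_1}\cdots p_r^{e_r}$ with distinct primes, the multinomial formula gives $a_k(n) = k!/(e_1!\cdots e_r!)$, and the support is contained in $\{n\le z^k\}$. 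The hypothesis $k\le(\log T)/(3\log z)$ ensures this length is at most $T^{1/3}$, which is the crucial input for the mean value step.

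Next, since $|P(\sigma+\i t)|^{2k} = |P(\sigma+\i t)^k|^2$, I would apply the Montgomery--Vaughan mean value theorem to the Dirichlet polynomial $P(\sigma+\i t)^k$ to obtain
$$\int_T^{2T} |P(\sigma+\i t)|^{2k}\,\d t = T\sum_n \frac{a_k(n)^2}{n^{2\sigma}} + O\bigg(\sum_n \frac{a_k(n)^2\, n}{n^{2\sigma}}\bigg).$$
The key arithmetic step is the elementary inequality $a_k(n)^2 \le k!\,a_k(n)$, which follows from $1/(e_1!\cdots e_r!)\le 1$. Combined with Stirling's $k! \le k^k$, this yields
$$\sum_n \frac{a_k(n)^2}{n^{2\sigma}} \le k!\sum_n \frac{a_k(n)}{n^{2\sigma}} = k!\,P(2\sigma)^k \le (kS)^k,$$
so that dividing by $T$ produces the claimed main term.

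For the error, the length restriction $n\le z^k\le T^{1/3}$ gives
$$\sum_n \frac{a_k(n)^2\, n}{n^{2\sigma}} \le z^k\cdot k!\,S^k \le T^{1/3}(kS)^k,$$
so after dividing by $T$ the off-diagonal error contributes $\ll T^{-2/3}(kS)^k$. This quantity is bounded both by $(kS)^k$ (since $T^{-2/3}\le 1$) and by $T^{-1/3}$ whenever $(kS)^k\le T^{1/3}$, hence the total is $\ll (kS)^k + T^{-1/3}$, as claimed. The main obstacle I expect is the combinatorial bookkeeping of $a_k(n)^2$, but the factorization $a_k(n)^2 = k!\cdot a_k(n)/(e_1!\cdots e_r!)$ reduces the squared sum cleanly to a one-power sum; everything else is a routine application of the mean value theorem under the crucial length constraint $k\le(\log T)/(3\log z)$.
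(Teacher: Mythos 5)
Your argument is correct and is exactly the standard proof of this moment estimate: expand $P(s)^k=\sum_n a_k(n)n^{-s}$ with $a_k(n)=k!/(e_1!\cdots e_r!)$, apply the Montgomery--Vaughan mean value theorem to a Dirichlet polynomial of length $z^k\le T^{1/3}$ (this is precisely where the hypothesis $k\le(\log T)/(3\log z)$ enters), and bound the diagonal sum via $a_k(n)^2\le k!\,a_k(n)$ together with $\sum_n a_k(n)n^{-2\sigma}=S^k$ and $k!\le k^k$. The paper itself does not reprove the lemma --- it simply cites Lamzouri's Lemma~4.2 --- and that reference uses this same route, so there is no substantive difference of method to report. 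One small remark: your computation actually delivers the slightly cleaner bound $\ll (kS)^k$ on its own, because the off-diagonal error $T^{-1}z^k(kS)^k\le T^{-2/3}(kS)^k$ is already dominated by the main term for $T\ge 1$; the extra $T^{-1/3}$ in the statement is therefore harmless slack rather than a term your argument needs.
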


\begin{proof}
	This is \cite[Lemma 4.2]{La2011}.
\end{proof}

Using Lemma 2.4, we can give a generalization of Lemma 2.3. Here $y$ can be as small as $\log T$.

\begin{lemma}\label{l145}
Let $\sigma\in (\frac{1}{2}, 1)$ be a fixed number, and let $c_j(\sigma)$ be some suitable positive constants depending on $\sigma$.
Let $T\ge 2$, 
$\log T\le y\le(\log T)^{2/(\sigma- {\frac12})}$
and $c_1(\sigma)(\log_2T/\log T)^2\le \lambda \le (\log T)^ {\frac12}/(y^{\sigma- {\frac12}}\log_2T)$. 
Then we have
$$
|\log\zeta(\sigma+{\rm i}t) - \log\zeta(\sigma+{\rm i}t; y)|\le 2\lambda
$$
for all $t\in [T, 2T]$ except for a set of measure at most 
$O(T\exp(- {4{\rm e}^{-1}}(\sigma-\frac{1}{2})\lambda^2 y^{2\sigma-1}\log y))$.
\end{lemma}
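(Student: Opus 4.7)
The approach combines Lemma~\ref{l3} (approximation of $\log\zeta$ by a long Dirichlet polynomial outside a thin exceptional set) with Lemma~\ref{l4} (moment bound for short prime sums), via Markov's inequality. First, fix an intermediate cutoff $z\in[(\log T)^{A(\sigma)}, T^{a(\sigma)}]$ satisfying the hypothesis of Lemma~\ref{l3}. That lemma then gives, outside an exceptional set of measure $\ll T^{1-(\sigma-\frac12)/2}z(\log T)^5$ (negligible against the target),
\[
\log\zeta(\sigma+{\rm i}t) = \sum_{p^\nu\le z}\frac{1}{\nu\,p^{\nu(\sigma+{\rm i}t)}} + O\!\left(z^{-(\sigma-\frac12)/2}(\log z)^2\log T\right).
\]
Subtracting the Taylor expansion $\log\zeta(\sigma+{\rm i}t;y)=\sum_{p\le y}\sum_{\nu\ge1}(\nu p^{\nu(\sigma+{\rm i}t)})^{-1}$ isolates the principal difference
\[
S(t) := \sum_{y<p\le z}\frac{1}{p^{\sigma+{\rm i}t}},
\]
together with deterministic remainders from prime-power terms of degree $\nu\ge2$, each of absolute size $\ll 1/(y^{2\sigma-1}\log y)$. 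In the regime where the bound to be proved is not vacuous these remainders are absorbed into $\lambda$, and the task reduces to estimating $\mathrm{meas}\{t\in[T,2T]:|S(t)|>2\lambda\}$.

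For any admissible integer $k$, Markov's inequality together with Lemma~\ref{l4} yields
\[
\mathrm{meas}\{|S(t)|>2\lambda\} \le \frac{T(kV)^k + T^{2/3}}{(2\lambda)^{2k}}, \qquad V := \sum_{y<p\le z}\frac{1}{p^{2\sigma}}.
\]
Partial summation from the prime number theorem (in the spirit of Lemma~\ref{l122} but at exponent $2\sigma$) gives $V=(1+o(1))/((2\sigma-1)y^{2\sigma-1}\log y)$. The function $k\mapsto(kV/(4\lambda^2))^k$ is minimised at $k^\star:=4\lambda^2/(eV)$, where it equals $\exp(-k^\star)=\exp\!\bigl(-8e^{-1}(\sigma-\tfrac12)\lambda^2 y^{2\sigma-1}\log y(1+o(1))\bigr)$, stronger than the target by a factor of two in the exponent---slack that is crucial for subsequent losses.

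The main obstacle is the admissibility constraint $k\le(\log T)/(3\log z)$ of Lemma~\ref{l4}, which in the hypothesised range of $\lambda$ and $y$ forces $k$ strictly below $k^\star$ for any single permissible~$z$. To circumvent this I dyadically decompose $(y,z]=\bigsqcup_{j=0}^J(y_j,y_{j+1}]$ with $y_{j+1}=qy_j$ for a constant $q=q(\sigma)>1$ chosen so that $q^{(2\sigma-1)/2}\ge 3$, which secures the geometric inequality $\sum_{j\ge0}\sqrt{V_j}\le\sqrt{2V}$ where $V_j:=\sum_{y_j<p\le y_{j+1}}p^{-2\sigma}$. On each piece $S_j(t):=\sum_{y_j<p\le y_{j+1}}p^{-(\sigma+{\rm i}t)}$ I apply Markov and Lemma~\ref{l4} with threshold $\alpha_j:=\sqrt{eC_0V_j}$, where $C_0:=4e^{-1}(\sigma-\tfrac12)\lambda^2 y^{2\sigma-1}\log y$, so that $\sum_j\alpha_j\le\sqrt{2eC_0V}=2\lambda$; the corresponding exponent $k_j=\lceil C_0\rceil$ is admissible on the pieces where $\log y_{j+1}$ is small, and on the far-end pieces where it is not, the maximal admissible $k_j$ combined with the geometric smallness $V_j\asymp q^{-j(2\sigma-1)}V$ still delivers the required decay. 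Summing the $J=O_\sigma(\log T)$ piecewise bounds (a polylogarithmic overhead absorbed by the factor-of-two slack in the exponent) and adjoining the Lemma~\ref{l3} exceptional set produces the asserted measure bound.
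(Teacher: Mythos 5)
Your backbone coincides with the paper's: approximate $\log\zeta$ by $\log\zeta(\cdot;z)$ via Lemma~\ref{l3} with a polylogarithmic cutoff $z$, then control $S(t)=\sum_{y<p\le z}p^{-\sigma-{\rm i}t}$ by Markov's inequality together with the moment bound of Lemma~\ref{l4}. The paper does this with a \emph{single} application of Markov, choosing $k=\lfloor 4{\rm e}^{-1}(\sigma-\tfrac12)\lambda^2 y^{2\sigma-1}\log y\rfloor$, and you correctly flag that its assertion "which satisfies the condition in Lemma~\ref{l4}" is in doubt: with $y$ and $\lambda$ near the tops of their stated ranges this $k$ can be as large as $\tfrac{8}{{\rm e}}\log T/\log_2 T$, while the admissibility ceiling is $(\log T)/(3\log z)=\tfrac{(\sigma-1/2)\log T}{18\log_2 T}$, smaller by a bounded $\sigma$-dependent factor. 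That is a genuine soft spot in the paper's proof (harmless in practice, since the specific constant $4{\rm e}^{-1}(\sigma-\tfrac12)$ never matters in \S4.3, where it is swallowed into $c_0$).

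But the dyadic decomposition you propose does not repair it, and there is a concrete gap in your argument. With thresholds $\alpha_j=\sqrt{{\rm e}C_0 V_j}$ the ratio $V_j/\alpha_j^2=1/({\rm e}C_0)$ is \emph{independent of $j$}, so the Markov minimiser is $k_j=\lceil C_0\rceil$ on every piece --- exactly the same inadmissible exponent, now replicated $J$ times. Your fallback ("the maximal admissible $k_j$ combined with the geometric smallness $V_j\asymp q^{-j(2\sigma-1)}V$ still delivers the required decay") is precisely the point that needs rigor and does not receive it: if $k_j$ is capped at $c'C_0$ with a constant $c'=c'(\sigma)<1$, the piece contributes $\ll(k_j/({\rm e}C_0))^{k_j}=\exp\bigl(-c'(1-\log c')C_0\bigr)$, a \emph{smaller} multiple of $C_0$ in the exponent, and the geometric smallness of $V_j$ does not help because you have scaled $\alpha_j$ down in the same proportion. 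You also overcount the pieces: since $y$ and $z$ are both powers of $\log T$, $J=O_\sigma(\log_2 T)$, not $O_\sigma(\log T)$. Finally, the "factor-of-two slack" (from $k^\star\approx 8{\rm e}^{-1}(\sigma-\tfrac12)\lambda^2 y^{2\sigma-1}\log y$ versus the target $4{\rm e}^{-1}(\cdots)$) exists only for a \emph{single} Markov with threshold $2\lambda$; once the $2\lambda$ budget is split over pieces with $\sum_j\alpha_j\le 2\lambda$, each piece can contribute at best $\exp(-C_0)$ with no slack left to absorb the union-bound factor. In sum, your route lands on $\exp\bigl(-c(\sigma)\lambda^2 y^{2\sigma-1}\log y\bigr)$ for some $c(\sigma)$ strictly smaller than $4{\rm e}^{-1}(\sigma-\tfrac12)$, just as the paper's single-block argument does after the admissible $k$ is corrected; the dyadic machinery adds complexity without buying precision. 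One further loose end, shared with the paper: the deterministic $\nu\ge 2$ remainder is $\ll 1/\bigl((2\sigma-1)y^{2\sigma-1}\log y\bigr)$, which is \emph{not} always $o(\lambda)$ under the stated lower bound $\lambda\ge c_1(\sigma)(\log_2 T/\log T)^2$; it is in the regime where Lemma~\ref{l145} is actually applied (\S4.3), but "absorbed into $\lambda$" deserves a condition.
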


\begin{proof}
Noticing that
\begin{align*}
\sum_{p\le z, \, p^{\nu}>z} \frac{1}{\nu p^{\nu\sigma}}
\le \sum_{p\le z, \, \nu\ge 2} \frac{(p^{\nu}/z)^{\sigma- {\frac12}}}{\nu p^{\nu\sigma}}
= \frac{1}{z^{\sigma- {\frac12}}} \sum_{p\le z, \, \nu\ge 2} \frac{1}{\nu p^{\nu/2}}
\ll \frac{1}{z^{\sigma- {\frac12}}} \sum_{p\le z} \frac{1}{p}
\ll \frac{\log_2z}{z^{\sigma- {\frac12}}},
\end{align*}
we can write
\begin{align*}
\sum_{2\le n\le z} \frac{\Lambda(n)}{n^{\sigma+{\rm i}t}\log n}
& = \sum_{p^{\nu}\le z} \frac{1}{\nu p^{\nu(\sigma+{\rm i}t)}}
= \sum_{p\le z} \sum_{\nu\ge 1} \frac{1}{\nu p^{\nu(\sigma+{\rm i}t)}}
+ O\Big(\frac{\log_2z}{z^{\sigma-1/2}}\Big)
\\
& = \log \zeta(1+{\rm i}t; z)
+ O\Big(\frac{\log_2z}{z^{\sigma- {\frac12}}}\Big).
\end{align*}
Using this and Lemma \ref{l3} with $y=z=(\log T)^{6/(\sigma- {\frac12})}$, we obtain 
$$
\log \zeta(1+{\rm i}t; z) + O\Big(\frac{\log_2z}{z^{\sigma- {\frac12}}}\Big)
= \log \zeta(1+{\rm i}t) + O\Big(\frac{(\log z)^2\log T}{z^{ {\frac12}(\sigma- {\frac12})}}\Big)
$$
i.e.
\begin{equation}\label{first-step}
\zeta(1+{\rm i}t)
= \zeta(1+{\rm i}t; z) 
\bigg\{1 + O\bigg(\bigg(\frac{\log_2T}{\log T}\bigg)^2\bigg)\bigg\}
\end{equation}
for all $t\in [T, 2T]$ but at most a set of measure of
\begin{equation}\label{exception:1}
T^{1-\frac{1}{2}(\sigma-\frac{1}{2})}z(\log T)^5
\ll T^{1-\frac{1}{4}(\sigma-\frac{1}{2})}.
\end{equation}
Then we use Lemma \ref{l4} to approximate $\zeta(\sigma+{\rm i}t;z)$ by $\zeta(\sigma+{\rm i}t;y)$ since 
$$
\zeta(\sigma+{\rm i}t;z)
= \zeta(\sigma+{\rm i}t;y)
\exp\bigg(\sum_{y\le p \le z}\bigg\{\frac{1}{p^{\sigma+{\rm i}t}}+O\bigg(\frac{1}{p^{2\sigma}}\bigg)\bigg\}\bigg).
$$
Choosing 
$$
k=\big\lfloor( {4{\rm e}^{-1}} (\sigma-\tfrac{1}{2})\lambda^2 y^{2\sigma-1}\log y\big\rfloor,
$$
which satisfies the condition in Lemma \ref{l4},
then by this lemma we have
$$
\frac{1}{T} \int_T^{2T}\bigg|\sum_{y\le p\le z}\frac{1}{p^{\sigma+{\rm i}t}}\bigg|^{2k}{\rm d}t
\ll  \bigg(k\sum_{y\le p\le z}\frac{1}{p^{2\sigma}}\bigg)^k + \frac{1}{T^ {\frac13}}
\ll  \bigg(\frac{k}{(\sigma-\frac{1}{2})y^{2\sigma-1}\log y}\bigg)^k + \frac{1}{T^ {\frac13}}\cdot
$$
So the frequency of $t\in[T,2T]$ such that 
$|\log\zeta(\sigma+{\rm i}t; z) - \log\zeta(\sigma+{\rm i}t; y)|>2\lambda$ is less than
\begin{equation}\label{exception:2}
\frac{1}{T} \int_T^{2T}\bigg|\frac{1}{2\lambda}\sum_{y\le p\le z}\frac{1}{p^{\sigma+{\rm i}t}}\bigg|^{2k}{\rm d}t
\ll \bigg(\frac{k}{4(\sigma-\frac{1}{2})\lambda^2 y^{2\sigma-1}\log y}\bigg)^k+(2\lambda)^{-2k}T^ {-\frac13}.
\end{equation}
Since $\lambda>c_1(\sigma)(\log_2T/\log T)^2$, we have
\begin{align*}
& |\log\zeta(\sigma+{\rm i}t) - \log\zeta(\sigma+{\rm i}t; y)|
\\
& \ge |\log\zeta(\sigma+{\rm i}t; z) - \log\zeta(\sigma+{\rm i}t; y)|
- |\log\zeta(\sigma+{\rm i}t; z) - \log\zeta(\sigma+{\rm i}t)|
\\
& \ge 2\lambda + O((\log_2)^2/(\log T)^2)
> \lambda.
\end{align*}
By \eqref{exception:1} and \eqref{exception:2},
the frequency of $t\in[T,2T]$ such that 
$|\log\zeta(\sigma+{\rm i}t) - \log\zeta(\sigma+{\rm i}t; y)|>2\lambda$ is less than, thanks to our choice of $k$,
\begin{align*}
& \ll \bigg(\frac{k}{4(\sigma-\frac{1}{2})\lambda^2 y^{2\sigma-1}\log y}\bigg)^k
+ \frac{1}{(2\lambda)^{2k}T^ {\frac13}}
+ \frac{1}{T^{\frac{1}{4}(\sigma-\frac{1}{2})}}
\\
& \ll {\rm e}^{-k}
+ (2\lambda)^{-2k} T^{-\frac{1}{4}(\sigma-\frac{1}{2})}.
\end{align*}
This implies the required result, since our hypothesis on $(\lambda, y)$ garanties 
$$
(2\lambda)^{-2k} T^{-\frac{1}{4}(\sigma-\frac{1}{2})}
\le T^{-\frac{1}{8}(\sigma-\frac{1}{2})}
\le \exp(- {4{\rm e}^{-1}}(\sigma-\tfrac{1}{2})\lambda^2 y^{2\sigma-1}\log y).
$$
	Combining this with the first step, Lemma \ref{l145} follows.
\end{proof}	

\vskip 6mm

\section{{ Moments of the short Euler products}}

In this section, we will evaluate the $k$-th moment of the short Euler product $\zeta(\sigma+\i t;y)$ by proving the following proposition, which is important for the proof of Theorem \ref{th22}. It has a higher order expansion in the exponent, which is an improvement of equation (4.2) in \cite{La2011}.

\begin{proposition}\label{th1} 
 {Let $\sigma\in (\frac{1}{2}, 1)$ be a fixed constant and let $N$ be a non-negative integer.}
Then we have
$$
\frac{1}{T}\int_T^{2T}|\zeta(\sigma+{\rm i}t;y)|^k{\rm d}t
= \exp\bigg(\frac{k^{1/\sigma}}{\log k}\bigg\{\sum_{n=0}^N\frac{C_n}{(\log k)^n}
+ O\bigg(\frac1{(\log k)^{N+1}} + \bigg(\frac {k^{1/\sigma}}{y}\bigg)^{2\sigma-1}\bigg)\bigg\}\bigg)
$$
uniformly for
\begin{equation}\label{Cond:Tyk}
T\ge 3
\qquad\text{and}\qquad
ky^{1-\sigma} \le \tfrac{1}{8}(1-\sigma)\log T,
\end{equation} 
where 
\begin{equation}\label{cn2}
C_n := \int_0^\infty \frac{(\log t)^n}{t^{{1}/{\sigma}+1}} \log I_0(t) \, {\rm d}t
\quad(n\ge0)
\end{equation}
and $I_0(t)$ is the Bessel function given by \eqref{def:Bessel0}. 
Especially, we have $C_0>0$.
\end{proposition}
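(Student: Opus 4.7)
The plan is to follow the Granville--Soundararajan and Lamzouri moment method and to refine only the final asymptotic analysis of the prime sum, since the higher-order expansion in powers of $1/\log k$ is the sole improvement over \cite[Proposition 4.1]{La2011}.

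First I would expand the short Euler product via the Dirichlet series $\zeta(s;y)^{k/2}=\sum_{P(n)\le y}d_{k/2}(n)/n^s$ supported on $y$-smooth integers, which yields
\begin{align*}
|\zeta(\sigma+\i t;y)|^k=\sum_{\stacksum{m,n\ge 1}{P(m),P(n)\le y}}\frac{d_{k/2}(m)d_{k/2}(n)}{(mn)^\sigma}\Big(\frac{n}{m}\Big)^{\i t}.
\end{align*}
Integrating $t$ over $[T,2T]$ produces the diagonal main term $T\prod_{p\le y}\sum_\nu d_{k/2}(p^\nu)^2/p^{2\nu\sigma}$, while the off-diagonal is controlled by $|\int_T^{2T}(n/m)^{\i t}\d t|\ll 1/|\log(n/m)|$ combined with divisor-moment estimates exactly as in \cite[Proposition 4.1]{La2011}; the hypothesis $ky^{1-\sigma}\le \tfrac{1}{8}(1-\sigma)\log T$ is tailored precisely so that this off-diagonal contribution is absorbed at the exponential scale. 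Applying Lemma \ref{l1} rewrites the diagonal as $\prod_{p\le y}I_0(k/p^\sigma)\exp\{O_\sigma(k/p^{2\sigma})\}$, and since $2\sigma>1$ the secondary factor contributes only $\exp(O_\sigma(k))$, which is absorbed into the stated error. It therefore remains to evaluate
\begin{align*}
S(k,y):=\sum_{p\le y}\log I_0(k/p^\sigma).
\end{align*}

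Next I would use the prime number theorem with classical error together with partial summation to replace the prime sum by $\int_2^y\log I_0(k/t^\sigma)\d t/\log t$, and substitute $u=k/t^\sigma$ to obtain
\begin{align*}
S(k,y)=k^{1/\sigma}\int_{k/y^\sigma}^{k/2^\sigma}\frac{\log I_0(u)}{(\log k-\log u)\,u^{1/\sigma+1}}\d u+(\text{PNT remainder}).
\end{align*}
Expanding $(\log k-\log u)^{-1}=(\log k)^{-1}\sum_{n=0}^{N}(\log u/\log k)^n+O((\log u)^{N+1}/(\log k)^{N+2})$ on the bulk of the range and pushing each truncated integral to $(0,\infty)$ produces the constants $C_n$ defined in \eqref{cn2}. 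Convergence of $C_n$ at $0$ uses $\log I_0(u)\ll u^2$ from \eqref{23} together with $\sigma>\tfrac{1}{2}$; convergence at $\infty$ uses $\log I_0(u)\ll u$ from \eqref{24} together with $\sigma<1$. The cost of extending the lower endpoint to $0$ is bounded by
\begin{align*}
\int_0^{k/y^\sigma}\frac{|\log u|^n\cdot u^2}{u^{1/\sigma+1}}\d u\ll\Big(\frac{k}{y^\sigma}\Big)^{2-1/\sigma}(\log k)^n=\Big(\frac{k^{1/\sigma}}{y}\Big)^{2\sigma-1}(\log k)^n,
\end{align*}
which after multiplication by $k^{1/\sigma}/\log k$ produces exactly the claimed error term $(k^{1/\sigma}/y)^{2\sigma-1}$. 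Positivity $C_0>0$ follows at once from $I_0(u)>1$ for $u>0$.

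The main obstacle will be the careful treatment of the upper endpoint $u\approx k/2^\sigma$, where the geometric expansion of $(\log k-\log u)^{-1}$ degenerates. I plan to handle this by splitting the integral at an intermediate point such as $u=k^{1-\eta}$ for a small fixed $\eta>0$, applying the expansion only on the lower piece, and bounding the upper piece via the crude estimate $\log I_0(u)\ll u$ combined with the rapid decay $u^{-1/\sigma-1}$; this gives a contribution of polynomial-in-$k$ negative size, which is absorbed into $1/(\log k)^{N+1}$. A parallel but more routine argument handles the PNT remainder in the relevant range of $y$ allowed by \eqref{Cond:Tyk}. Combining these pieces and exponentiating recovers the stated asymptotic.
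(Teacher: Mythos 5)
Your proposal is correct and follows essentially the same route as the paper: reduce to the diagonal sum $\sum_{n\in S(y)}d_{k/2}(n)^2/n^{2\sigma}$ via the Lamzouri-type moment estimate (Lemma \ref{l30}), invoke the Bessel representation of Lemma \ref{l1} factor by factor, pass to an integral with the prime number theorem, substitute $t=k/u^\sigma$, expand $(1-\log t/\log k)^{-1}$ geometrically, and extend the truncated integrals to $(0,\infty)$ to produce the constants $C_n$. The only organizational variant is where the cutoff that controls the degeneration near $t\approx k$ is introduced: the paper splits the primes at $p=k^{1/(2\sigma)}$ \emph{before} applying \eqref{21}, disposing of the small primes with \eqref{22}, whereas you apply \eqref{21} to all $p\le y$ and split the resulting integral at $u=k^{1-\eta}$ --- effectively the same maneuver at a different stage, and you correctly identify this as the delicate point. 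Two small details to tidy in a write-up: the logarithmic factor in the cost of pushing the lower endpoint to $0$ is $(\log(y^\sigma/k))^n$, not $(\log k)^n$, and can be larger when $y>k^{2/\sigma}$ (the paper handles this case separately and shows the extra factor is still harmless); and the degenerate regime $y\le k^{1/\sigma}$, where $(k^{1/\sigma}/y)^{2\sigma-1}\ge1$ and the statement is trivially $\exp(O(k^{1/\sigma}/\log k))$, should be dispatched at the outset as the paper does.
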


The integer $n\ge 1$ is called $y$-friable if the largest prime factor $P(n)$ of $n$ is less than $y$
($P(1)=1$ by convention).
Denote by $S(y)$ the set of $y$-friable integers.
We will show that, in the expansion of the $k$-th moment of $\zeta(\sigma+{\rm i}t;y)$, the diagonal terms lead to the main term, while the off-diagonal terms contribute to the error term. Again we strengthen that, $k$ is not necessarily an integer.

\begin{lemma}\label{l30} 
Let $\sigma\in (\frac{1}{2}, 1)$ be a fixed constant.
Then we have
$$
\frac{1}{T}\int_T^{2T}|\zeta(\sigma+{\rm i}t;y)|^k{\rm d}t
= \sum_{n\in S(y)}\frac{d_{k/2}(n)^2}{n^{2\sigma}}+O\bigg(\exp\bigg(-\frac{\log T}{4\log y}\bigg)\bigg),
$$ 
uniformly for $(T, y, k)$ in \eqref{Cond:Tyk},
where the implied constant depends on $\sigma$ only.
\end{lemma}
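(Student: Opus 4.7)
The plan is to expand $\zeta(\sigma+\i t;y)^{k/2}$ as an absolutely convergent Dirichlet series over $y$-friable integers,
$$
\zeta(\sigma+\i t;y)^{k/2} = \prod_{p\le y}(1-p^{-\sigma-\i t})^{-k/2} = \sum_{n\in S(y)} \frac{d_{k/2}(n)}{n^{\sigma+\i t}},
$$
so that
$$
|\zeta(\sigma+\i t;y)|^{k} = \sum_{m,n\in S(y)} \frac{d_{k/2}(m)d_{k/2}(n)}{(mn)^{\sigma}} \,(m/n)^{\i t}.
$$
Integrating over $t\in[T,2T]$ separates the diagonal $m=n$, which yields the announced main term $\sum_{n\in S(y)} d_{k/2}(n)^2/n^{2\sigma}$, from the off-diagonal terms, and the task is to bound the latter together with a tail correction by $\exp(-\log T/(4\log y))$.

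To handle the off-diagonal, I would truncate the Dirichlet series at a level $X=T^{\delta}$ with $\delta\in(0,1)$ close to $\tfrac12$, writing $\zeta(\sigma+\i t;y)^{k/2}=D_X(t)+R(t)$. For the off-diagonal pairs $m\ne n\le X$ inside $|D_X|^2$ one has $|\log(m/n)|\ge 1/(2X)$, so $\bigl|\int_T^{2T}(m/n)^{\i t}\,\d t\bigr|\ll 1/|\log(m/n)|$ yields
$$
\frac{1}{T}\!\!\sum_{\substack{m\ne n\le X\\ m,n\in S(y)}}\!\!\frac{d_{k/2}(m)d_{k/2}(n)}{(mn)^{\sigma}}\bigg|\int_T^{2T}(m/n)^{\i t}\d t\bigg| \ll \frac{X}{T}\bigg(\sum_{\substack{n\in S(y)\\ n\le X}} \frac{d_{k/2}(n)}{n^\sigma}\bigg)^{\!2} \le \frac{X}{T}\prod_{p\le y}(1-p^{-\sigma})^{-k}.
$$
The logarithm of the Euler product is $\ll ky^{1-\sigma}/((1-\sigma)\log y)$ by Lemma~\ref{l122}, and the hypothesis $ky^{1-\sigma}\le \tfrac{1}{8}(1-\sigma)\log T$ turns this into $O(\log T/\log y)$, so a well-chosen $\delta$ renders the whole bound $\le \exp(-\log T/(4\log y))$.

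For the tail $\sum_{n>X,\,n\in S(y)} d_{k/2}(n)^{2}/n^{2\sigma}$ of the main term, I would apply Rankin's trick with $\alpha=1/\log y$:
$$
\sum_{\substack{n>X\\ n\in S(y)}} \frac{d_{k/2}(n)^{2}}{n^{2\sigma}} \le X^{-2\alpha}\prod_{p\le y}\sum_{\nu\ge 0}\frac{d_{k/2}(p^\nu)^2}{p^{\nu(2\sigma-2\alpha)}} \le X^{-2\alpha}\prod_{p\le y} I_0\bigl(k/p^{\sigma-\alpha}\bigr)\exp\bigl(O(k/p^{2\sigma-2\alpha})\bigr)
$$
by Lemma~\ref{l1}. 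Using $\log I_0(t)\le t$, $p^\alpha\le \me$ for $p\le y$, Lemma~\ref{l122}, and once more the hypothesis, the product is at most $\exp(O(\log T/\log y))$, while $X^{-2\alpha}=T^{-2\delta/\log y}$ supplies the required saving. The cross term $\tfrac{1}{T}\int_T^{2T}D_X\overline R\,\d t$ and the $L^2$-norm of $R$ are absorbed by the same circle of estimates, either via Cauchy--Schwarz against the tail bound just proved or by treating them as further off-diagonal sums of the same shape.

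The main obstacle is the delicate balancing of constants: the truncation exponent $\delta$, the Rankin parameter $\alpha$, and the implicit constants from Lemmas~\ref{l1} and~\ref{l122} must be simultaneously tuned so that every contribution (off-diagonal of $D_X$, the Rankin tail, and the cross terms) falls below $\exp(-\log T/(4\log y))$; the factor $\tfrac18$ in the hypothesis $ky^{1-\sigma}\le \tfrac{1}{8}(1-\sigma)\log T$ provides exactly the slack needed for this calibration. A secondary technical point is that $k$ is allowed to be non-integral, so $d_{k/2}$ is the generalized divisor function and all local factor estimates must pass through the modified Bessel function $I_0$ defined in \eqref{def:Bessel0} rather than any elementary combinatorial identity.
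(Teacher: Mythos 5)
The paper does not actually prove this lemma: its ``proof'' is a one-line citation to Proposition~4.1 of Lamzouri \cite{La2011}. Your sketch reconstructs an argument along the standard lines (and almost certainly the lines of Lamzouri's own proof): expand $\zeta(\sigma+\i t;y)^{k/2}$ as a nonnegative Dirichlet series over $y$-friable integers, separate the diagonal (which gives exactly the claimed main term), truncate at $X=T^{\delta}$, and use the hypothesis $ky^{1-\sigma}\le\tfrac18(1-\sigma)\log T$ together with Lemma~\ref{l122} and Rankin's trick to make the remaining contributions exponentially small in $\log T/\log y$. So you have genuinely filled in a proof where the paper only cites.

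The one place you are too glib is the treatment of $\tfrac1T\int_T^{2T}|R|^2\,\d t$ and the cross term, which you wave off as ``absorbed by the same circle of estimates.'' The off-diagonal bound $\int_T^{2T}(m/n)^{\i t}\,\d t\ll 1/|\log(m/n)|$ is useless once both $m,n>X$, since then $|\log(m/n)|$ has no lower bound. What rescues the argument is that $R$ admits a pointwise Rankin bound, $|R(t)|\le X^{-\alpha}\prod_{p\le y}(1-p^{-\sigma+\alpha})^{-k/2}$, so that $\tfrac1T\int|R|^2\le\|R\|_\infty^2$ directly and the cross term follows from Cauchy--Schwarz against the already controlled $\tfrac1T\int|D_X|^2$. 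With $\delta=\tfrac12$, $\alpha=1/\log y$ and the observation $p^{\alpha}\le\mathrm e$ for $p\le y$, the Euler-product side is at most $\exp\{(1+o(1))\,\mathrm e\log T/(16\log y)\}$ against a Rankin saving of $\exp\{-\log T/(2\log y)\}$, and since $\tfrac12-\mathrm e/16>\tfrac14$ the target $\exp\{-\log T/(4\log y)\}$ is met with room to spare. Once you make this step explicit rather than implicit, the sketch is a correct route to the lemma.
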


\begin{proof}
	This is a special case of Proposition 4.1 of \cite{La2011}.
\end{proof}

Now we are ready to prove Proposition \ref{th1}.

\vskip 0,5mm

\noindent{\it Proof of Proposition \ref{th1}.}
In view of Lemma \ref{l30}, it is sufficient to show that
\begin{equation}\label{Asymp:D}
\sum_{n\in S(y)}\frac{d_{k/2}(n)^2}{n^{2\sigma}}
= \exp\bigg(\frac{k^{1/\sigma}}{\log k} \bigg\{\sum_{n=0}^N\frac{C_n}{(\log k)^n}
+ O\bigg(\frac1{(\log k)^{N+1}} + \bigg(\frac {k^{1/\sigma}}{ y}\bigg)^{2\sigma-1}\bigg)\bigg\}\bigg).
\end{equation}

Firstly, we note that \eqref{Asymp:D} is trivial if $y\le k^{1/\sigma}$.
In fact, since the divisor function is multiplicative, by \eqref{22} of Lemma \ref{l1} we have
\begin{align*}
\sum_{n\in S(y)}\frac{d_{k/2}(n)^2}{n^{2\sigma}}
= \prod_{p\le  y} \sum_{\nu\ge 0}\frac{d_{k/2}(p^{\nu})^2}{p^{2\nu\sigma}}
= \exp\bigg\{O\bigg(\sum_{p\le k^{1/\sigma}}\frac k{p^{\sigma}}\bigg)\bigg\}
= \exp\bigg\{O\bigg(\frac{k^{1/\sigma}}{\log k}\bigg)\bigg\}.
\end{align*}

Now we treat the case of $y>k^{1/\sigma}$.
As before, by Lemma \ref{l1} we can write
\begin{equation}\label{D:expression}
\begin{aligned}
\sum_{n\in S(y)}\frac{d_{k/2}(n)^2}{n^{2\sigma}}
& = \prod\limits_{p\le  k^{1/(2\sigma)}} \exp\bigg\{O\bigg(\frac k{p^{\sigma}}\bigg)\bigg\}
\prod_{k^{1/(2\sigma)}<p\le y}I_0\bigg(\frac{k}{p^\sigma}\bigg) \exp\bigg\{O\bigg(\frac k{p^{2\sigma}}\bigg)\bigg\}
\\
& = \exp\bigg\{O_{\sigma, N}\bigg(\frac{k^{1/\sigma}}{(\log k)^{N+2}}\bigg)\bigg\}
\prod_{ k^{1/(2\sigma)}<p\le y}I_0\bigg(\frac{k}{p^\sigma}\bigg),
\end{aligned}
\end{equation}
where the last equation holds since
$$
\sum_{p\le  k^{1/(2\sigma)}} \frac k{p^{\sigma}} 
\ll k \frac{(k^{1/(2\sigma)})^{1-\sigma}}{\log k^{1/(2\sigma)}}
\ll \frac{k^{1/2+1/(2\sigma)}}{\log k}
\ll_{\sigma, N} \frac{k^{1/\sigma}}{(\log k)^{N+2}}
$$
and
$$
\sum_{k^{1/(2\sigma)}<p\le y}\frac{k}{p^{2\sigma}}
\ll \frac{k^{1/(2\sigma)}}{\log k} 
\ll_{\sigma, N} \frac{k^{1/\sigma}}{(\log k)^{N+2}}\cdot
$$


Next we evaluate the second factor on the right-hand side de \eqref{D:expression}.
Taking the logarithm of this factor and using the prime number theorem, we have
\begin{equation}\label{814}
\log\prod_{k^{1/(2\sigma)}<p\le y} I_0\bigg(\frac{k}{p^\sigma}\bigg)
= \int_{k^{1/(2\sigma)}}^y\log I_0\bigg(\frac {k}{u^\sigma}\bigg){\rm d}\pi(u)
= \mathcal{M} + \mathcal{E},
\end{equation}
where
$$
\mathcal{M} := \int_{k^{1/(2\sigma)}}^y\log I_0\bigg(\frac {k}{u^\sigma}\bigg)\frac{{\rm d}u}{\log u},
\qquad
\mathcal{E} := \int_{k^{1/(2\sigma)}}^y\log I_0\bigg(\frac {k}{u^\sigma}\bigg){\rm d}O\big(u{\rm e}^{-c\sqrt{\log u}}\big).
$$

In view of \eqref{23} and \eqref{24}, we always have $\log I_0(t) \ll t^2\;(t\ge 0)$.
Thus using this bound and \eqref{25}, we can derive that
\begin{equation}\label{UB:E}
\begin{aligned}
\mathcal{E}
& = \log I_0\bigg(\frac {k}{u^\sigma}\bigg)O\big(u{\rm e}^{-c\sqrt{\log u}}\big)\bigg|_{k^{1/(2\sigma)}}^y
- \int_{k^{1/2\sigma}}^y\bigg(\log I_0\bigg(\frac {k}{u^\sigma}\bigg)\bigg)'
O\big(u{\rm e}^{-c\sqrt{\log u}}\big){\rm d}u
\\
& \ll \bigg(\frac{k}{y^\sigma}\bigg)^2 \frac{y}{{\rm e}^{c'\sqrt{\log y}}}
+ \frac{k^{1/2+1/(2\sigma)}}{{\rm e}^{c'\sqrt{\log k}}}
+ k \int_{k^{1/(2\sigma)}}^{k^{1/\sigma}} \frac{{\rm e}^{-c\sqrt{\log u}}}{u^{\sigma}}{\rm d}u
+ k^2 \int_{k^{1/\sigma}}^y \frac{{\rm e}^{-c\sqrt{\log u}}}{u^{2\sigma}}{\rm d}u
\\
& \ll \bigg(\frac{k^{1/\sigma}}{y}\bigg)^{2\sigma-1} k^{1/\sigma} {\rm e}^{-c'\sqrt{\log y}}
+ k^{1/2+1/(2\sigma)}{\rm e}^{-c'\sqrt{\log k}}
+ k^{1/\sigma}{\rm e}^{-c'\sqrt{\log k}}
\\
& \ll \frac{k^{1/\sigma}}{\log k} 
\bigg(\bigg(\frac{k^{1/\sigma}}{y}\bigg)^{2\sigma-1} \frac{\log k}{{\rm e}^{c'\sqrt{\log y}}}
+ \frac{k^{-(1/\sigma-1)/2}\log k}{{\rm e}^{c'\sqrt{\log k}}}\bigg).
\end{aligned}
\end{equation}
This is acceptable, since $y\ge k^{1/\sigma}$.

In order to calculate the main term of \eqref{814}, setting $t=k/u^\sigma$, and integrating by substitution, 
then we have
$$
\mathcal{M}
= k^{1/\sigma} \int_{k/y^\sigma}^{k^{1/2}}\frac{\log I_0(t)}{t^{1/\sigma+1}\log(k/t)}{\rm d}t
= \frac{k^{1/\sigma}}{\log k} \int_{k/y^\sigma}^{k^{1/2}}\frac{\log I_0(t)}{t^{1/\sigma+1}}
\frac 1{1-\log t/\log k} {\rm d}t.
$$
For $k/y^\sigma\le t\le k^{1/2}$, we can write
$$
\frac{1}{1-\log t/\log k}
= \sum_{n=0}^{N}\frac{(\log t)^n}{(\log k)^n}
+ O_{\sigma, N}\bigg(\frac{(\log t)^{N+1}}{(\log k)^{N+1}}\bigg).
$$
Thus
$$
\mathcal{M}
= \frac{k^{1/\sigma}}{\log k}
\bigg\{\sum_{n=0}^{N} \frac{C_n(k, y)}{(\log k)^n} + O\bigg(\frac{1}{(\log k)^{N+1}}\bigg)\bigg\},
$$
where 
$$
C_n(k, y) := \int_{k/y^\sigma}^{k^{1/2}}\frac{(\log t)^n}{t^{1/\sigma+1}} \log I_0(t)\, {\rm d}t
$$
and we have used \eqref{23}-\eqref{24} to bound
$$
\int_{k/y^\sigma}^{k^{1/2}} \frac{(\log t)^{N+1}}{t^{1/\sigma+1}} \log I_0(t)\, {\rm d}t
\ll \int_{k/y^\sigma}^1 \frac{(\log t)^{N+1}}{t^{1/\sigma-1}} {\rm d}t
+ \int_1^{k^{1/2}} \frac{(\log t)^{N+1}}{t^{1/\sigma}} {\rm d}t
\ll_{\sigma, N} 1.
$$
On the other hand, 
we enlarge the integral interval to $(0, \infty)$, and use the definition of $C_n$, then the main term of the last formula is 
$$
C_n(k, y) = C_n - \widetilde{C}_n
$$
where
\begin{align*}
\widetilde{C}_n
& := \bigg(\int_{0}^{k/y^\sigma}+\int_{k^{1/2}}^{\infty}\bigg)\frac{(\log t)^n}{t^{1/\sigma+1}} \log I_0(t) {\rm d}t
\\
& \; \ll \int_{0}^{k/y^\sigma} \frac{(-\log t)^n}{t^{1/\sigma-1}} {\rm d}t
+ \int_{k^{1/2}}^{\infty}\frac{(\log t)^n}{t^{1/\sigma}}{\rm d}t
\\
& \; \ll_{\sigma, N} \bigg(\frac{k^{1/\sigma}}{y}\bigg)^{2\sigma-1}(\log(y^\sigma/k))^n
+ \frac{(\log k)^n}{k^{(1/\sigma-1)/2}},
\end{align*}
thanks to \eqref{23}-\eqref{24}.
It follows that 
\begin{align*}
\frac{\widetilde{C}_n}{(\log k)^n}
& \ll \bigg(\frac{k^{1/\sigma}}{y}\bigg)^{2\sigma-1} \bigg(\frac{\log(y^\sigma/k)}{\log k}\bigg)^n
+ \frac{1}{k^{(1/\sigma-1)/2}}
\\
& \ll \bigg(\frac{k^{1/\sigma}}{y}\bigg)^{2\sigma-1}
+ \frac{1}{(\log k)^{N+1}},
\end{align*}
since $\big(\frac{\log(y^\sigma/k)}{\log k}\big)^n\ll_{\sigma, N} 1$ if $y\le k^{2/\sigma}$ and otherwise we have
\begin{align*}
 \bigg(\frac{k^{1/\sigma}}{y}\bigg)^{2\sigma-1}\bigg(\frac{\log(y^\sigma/k)}{\log k}\bigg)^n\le \bigg(\frac{k^{1/\sigma}}{k^{2/\sigma}}\bigg)^{2\sigma-1}\bigg(\frac{\log((k^{2/\sigma})^\sigma/k)}{\log k}\bigg)^n\ll \frac{1}{(\log k)^{N+1}}\cdot
\end{align*}
Thus
\begin{equation}\label{Asymp:M}
\mathcal{M}
= \frac{k^{1/\sigma}}{\log k}
\bigg\{\sum_{n=0}^{N} \frac{C_n}{(\log k)^n} 
+ O_{\sigma, N}\bigg(\frac{1}{(\log k)^{N+1}}
+ \bigg(\frac{k^{1/\sigma}}{y}\bigg)^{2\sigma-1}\bigg)\bigg\}.
\end{equation}
Now the required \eqref{Asymp:D} follows from \eqref{D:expression}, \eqref{814}, \eqref{UB:E} and \eqref{Asymp:M}.
\hfill
$\square$

\vskip 6mm

\section{Proof of Theorem \ref{th22}}

Recall that we have define the short Euler products by
$$
\zeta(\sigma+\i t;y):=\prod_{p\le y}\bigg(1-\frac{1}{p^{\sigma+\i t}}\bigg)^{-1},
$$
and its distribution function
$$
\Phi_T(\tau; y) := \frac{1}{T}  {\rm meas}\big\{t\in[T,2T]:\log|\zeta(\sigma+{\rm i}t; y)|>\tau\big\}.
$$	
In this section, we aim to prove the following proposition.

\begin{proposition}\label{prop4.1}
Let $\sigma\in (\frac{1}{2}, 1)$ be a fixed constant and let $N\ge1$ be an integer
and let $c_0=c_0(\sigma, N)$ be a large positive constant depending on $(\sigma, N)$.
Then we have
\begin{align*}
\Phi_T((1+\varepsilon_0)\tau; y)
\le \exp\bigg(\!-(\tau \log^{\sigma}\tau)^{\frac{1}{1-\sigma}}
\bigg\{\sum_{n=0}^{N}\frac{\mathfrak{a}_n(\log_2\tau)}{(\log\tau)^n} + O(\varepsilon_0)\bigg\}\bigg)
\le \Phi_T((1-\varepsilon_0)\tau; y)
\end{align*}
uniformly for 
\begin{equation}\label{Cond-Prop4.1:Tytau}
T\to\infty,
\quad
\log T\le y\le{(\log T)^2},
\quad 
1\ll\tau\le { \frac{c(\sigma)}{\log_2T}\bigg(\frac{\log T}{y^{1-\sigma}}\bigg)^{\frac{1-\sigma}{\sigma}}},
\end{equation}
where $c(\sigma)$ is a positive constant depending only on $\sigma$,
\begin{equation}\label{def:varepsilon0}
\varepsilon_0
= \varepsilon_0(\tau, y) 
= c_0 \bigg\{\bigg(\frac{\log_2\tau}{\log\tau}\bigg)^{N+1}
+ \bigg(\frac{(\tau\log\tau)^{\frac{1}{1-\sigma}}}{y}\bigg)^{\sigma-\frac{1}{2}}\bigg\},
\end{equation} 
the polynomials $\mathfrak{a}_n(\cdot)$ is the same as in Theorem \ref{th22} and the implied constant is absolute.
\end{proposition}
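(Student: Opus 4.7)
The plan is to extract the distributional asymptotics from the moment formula in Proposition~\ref{th1} by a two-sided saddle-point argument: Chebyshev's inequality for the upper bound and a tilted-measure argument for the lower bound. Write $L(k):=\log\bigl(\tfrac1T\int_T^{2T}|\zeta(\sigma+\i t;y)|^k\d t\bigr)$, so Proposition~\ref{th1} gives
$$
L(k)=\frac{k^{1/\sigma}}{\log k}\sum_{n=0}^{N}\frac{C_n}{(\log k)^n}+O\bigg(\frac{k^{1/\sigma}}{(\log k)^{N+2}}+\frac{k^{1/\sigma}}{\log k}\bigg(\frac{k^{1/\sigma}}{y}\bigg)^{2\sigma-1}\bigg).
$$
Throughout, let $k=k^*(\tau)$ denote the unique solution of the saddle equation $L'(k)=\tau$. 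Differentiating the leading term gives $\tau=C_0 k^{1/\sigma-1}/(\sigma\log k)(1+o(1))$, hence $\log k=\frac{\sigma}{1-\sigma}(\log\tau+\log_2\tau)+O(1)$ and $k^*(\tau)=(\sigma^2\tau\log\tau/((1-\sigma)C_0))^{\sigma/(1-\sigma)}(1+o(1))$.

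For the upper bound, Chebyshev gives $\Phi_T((1+\varepsilon_0)\tau;y)\le \exp(L(k)-k(1+\varepsilon_0)\tau)$ for every $k>0$, optimised at $k=k^*((1+\varepsilon_0)\tau)$. A routine expansion yields $L(k^*)-k^*\tau=-(1-\sigma)k^*\tau(1+o(1))$, whose leading coefficient works out to $-(\tau\log^{\sigma}\tau)^{1/(1-\sigma)}\mathfrak{a}_0$ with $\mathfrak{a}_0$ exactly as in \eqref{an2}; the extra factor of $(1+\varepsilon_0)$ contributes the $O(\varepsilon_0)$ inside the braces. To obtain the full expansion in $\sum_{n\le N}\mathfrak{a}_n(\log_2\tau)/(\log\tau)^n$, I would Newton-iterate the saddle equation: write $\log k=\frac{\sigma}{1-\sigma}(\log\tau+\log_2\tau)+\sum_n\beta_n(\log_2\tau)/(\log\tau)^n$, substitute into $L(k^*)-k^*\tau$ order by order, and collect powers of $1/\log\tau$. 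Induction on $n$ shows that each iteration contributes at most one new factor of $\log_2\tau$, hence $\deg\mathfrak{a}_n\le n$.

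For the lower bound I would work with the tilted probability measure $\d\mu_k(t)=|\zeta(\sigma+\i t;y)|^k/(TM_k)\,\d t$ on $[T,2T]$, under which $X_t:=\log|\zeta(\sigma+\i t;y)|$ has mean $L'(k)$ and variance $L''(k)$. Taking $k=k^*(\tau)$ forces $\mathbb{E}_{\mu_k}X=\tau$, while $L''(k^*)\sim (1-\sigma)\tau/(\sigma k^*)$ is far smaller than $(\varepsilon_0\tau)^2$ given the prescribed size of $\varepsilon_0$ in \eqref{def:varepsilon0}. Chebyshev under $\mu_k$ then yields $\mu_k\{(1-\varepsilon_0)\tau<X_t\le(1+\varepsilon_0)\tau\}\ge \tfrac12$, and undoing the tilt produces
$$
\Phi_T((1-\varepsilon_0)\tau;y)\ge \tfrac12 \me^{-k(1+\varepsilon_0)\tau}M_k=\exp\bigl(L(k)-k(1+\varepsilon_0)\tau-\log 2\bigr),
$$
which matches the upper bound up to the desired $O(\varepsilon_0)$ error.

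The hard part, as I see it, is not the structural argument above but the careful bookkeeping of error terms under the Legendre inversion. One must verify that the $(\log k)^{-N-1}$ error in Proposition~\ref{th1} transforms into $(\log_2\tau/\log\tau)^{N+1}$ (from $\log k\sim\frac{\sigma}{1-\sigma}\log\tau$), that the $(k^{1/\sigma}/y)^{2\sigma-1}$ error transforms into $\bigl((\tau\log\tau)^{1/(1-\sigma)}/y\bigr)^{\sigma-1/2}$ (from $k^{1/\sigma}\sim(\tau\log\tau)^{1/(1-\sigma)}$), and that the admissibility condition $ky^{1-\sigma}\le\tfrac18(1-\sigma)\log T$ in \eqref{Cond:Tyk} translates precisely into the range for $\tau$ in \eqref{Cond-Prop4.1:Tytau}. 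One must also check that the variance bound $L''(k^*)\sim(1-\sigma)\tau/(\sigma k^*)$ is uniform enough to be used globally, which follows from differentiating the main term in Proposition~\ref{th1} and bounding the error after differentiation (using essentially the same asymptotic framework). Once these verifications are in place, the polynomials $\mathfrak{a}_n(\cdot)$ are read off from the Newton iteration described above, and they depend only on $\sigma$ and the constants $C_0,C_1,\ldots,C_n$.
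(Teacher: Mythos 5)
Your structural plan (Chebyshev for the upper bound, tilted measure plus concentration for the lower bound) is a genuine large-deviations route and is different from what the paper does. The paper never touches derivatives of $L(k)$: it sets up a relation $\tau=\tau(k)$ via the to-be-chosen coefficients $a_n$, evaluates the Laplace transform $\int_{-\infty}^{\infty}\Phi_T(t;y)k\,\mathrm{e}^{kt}\,\mathrm{d}t$ at $k$ and at $k_{1,2}=(1\pm\varepsilon)k$ using Proposition~\ref{th1} (through Lemma~\ref{l821}), shows that the portions of the integral over $t<\tau_2$ and $t>\tau_1$ are each at most a quarter of the whole, and then reads off two-sided bounds on $\Phi_T$ from the monotonicity inequality $(\tau_1-\tau_2)\mathrm{e}^{k\tau_2}\Phi_T(\tau_1;y)\le\int_{\tau_2}^{\tau_1}\mathrm{e}^{kt}\Phi_T(t;y)\,\mathrm{d}t\le(\tau_1-\tau_2)\mathrm{e}^{k\tau_1}\Phi_T(\tau_2;y)$. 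Your Newton iteration that turns $\log k$ into an expansion in $\log\tau$ and $\log_2\tau$ is in substance the paper's Lemma~\ref{l822}, and your identification of $\mathfrak{a}_0$ and of how the ranges in \eqref{Cond:Tyk} translate into \eqref{Cond-Prop4.1:Tytau} is correct.

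The gap is in the lower bound. You invoke $L'(k^*)=\tau$ to center the tilted measure and $L''(k^*)\sim(1-\sigma)\tau/(\sigma k^*)$ to make Chebyshev under $\mu_{k^*}$ give probability at least one half to the window $((1-\varepsilon_0)\tau,(1+\varepsilon_0)\tau]$. But Proposition~\ref{th1} only furnishes $L(k)$ as an asymptotic with an $O(\cdot)$ error; you cannot legitimately differentiate it, and the parenthetical ``differentiating the main term and bounding the error after differentiation'' is circular because nothing in the statement controls the derivative of the error term. Obtaining the claimed estimates for $L'$ and $L''$ would require either proving moment formulas for $\frac1T\int_T^{2T}|\zeta(\sigma+\mathrm{i}t;y)|^k\log|\zeta(\sigma+\mathrm{i}t;y)|\,\mathrm{d}t$ and its square analogue (extra work with the same Euler-product machinery, not present here), or a convexity argument — but a second divided difference of $L$ at three points only controls an averaged $L''$, not its pointwise value at $k^*$, so that too does not follow for free. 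Similarly, locating $k^*$ by solving $L'(k)=\tau$ presupposes control of $L'$; the paper sidesteps this by \emph{defining} $\tau$ through \eqref{relation:tau=k} and then choosing the coefficients $a_n$ so that the cross terms cancel. The cleanest repair is therefore to replace the tilted-measure step with the paper's device: apply Lemma~\ref{l821} at $k$ and at $(1\pm\varepsilon)k$ to localize $\int\mathrm{e}^{kt}\Phi_T(t;y)\,\mathrm{d}t$ to $[\tau_2,\tau_1]$, then use monotonicity of $\Phi_T$; this yields both directions of the inequality simultaneously and never requires a derivative of $L$.
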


\subsection{Two preliminary lemmas}
The following lemma relates the moments of the short Euler products to the distribution function.

\begin{lemma}\label{l821} 
Let $\sigma\in (\frac{1}{2}, 1)$ be a fixed constant.
For any non-negative integer $N$, we have
$$
\int_{-\infty}^{\infty}\Phi_T(t;y)k{\rm e}^{kt} {\rm d}t
= \exp\bigg(\frac{k^{1/\sigma}}{\log k}\bigg\{\sum_{n=0}^N\frac{C_n}{(\log k)^n}
+ O\bigg(\frac1{(\log k)^{N+1}} + \bigg(\frac {k^{1/\sigma}}{ y}\bigg)^{2\sigma-1}\bigg)\bigg\}\bigg)
$$
uniformly for $(T, y, k)$ in \eqref{Cond:Tyk},
where $C_n$ is defined in \eqref{cn2} and the implied constant depends only on $N$ and $\sigma$. 
\end{lemma}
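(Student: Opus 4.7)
The plan is to recognize that Lemma \ref{l821} is essentially a tautological identity between the Laplace--Stieltjes transform of the distribution function and the $k$-th moment already evaluated in Proposition \ref{th1}. Specifically, setting $X(t) := \log|\zeta(\sigma+\i t;y)|$ on $[T,2T]$, the right-hand side of the lemma is just the moment $\frac{1}{T}\int_T^{2T}|\zeta(\sigma+\i t;y)|^k\d t$, which Proposition \ref{th1} evaluates under the hypothesis \eqref{Cond:Tyk}. So the whole task reduces to verifying the Fubini-type identity
$$
\frac{1}{T}\int_T^{2T} e^{kX(t)}\d t = \int_{-\infty}^{\infty} k\, e^{k\tau}\, \Phi_T(\tau;y)\d\tau.
$$

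First I would rewrite $e^{kX(t)} = \int_{-\infty}^{X(t)} k\,e^{k\tau}\d\tau$ and swap the order of integration using Fubini-Tonelli, noting that the indicator $\mathbf{1}_{\{X(t)>\tau\}}$ integrated in $t$ over $[T,2T]$ and divided by $T$ is exactly $\Phi_T(\tau;y)$. Alternatively one can perform integration by parts in the Stieltjes integral against $-\d\Phi_T(\tau;y)$, which amounts to the same thing. Second, I would check that both representations make sense and the boundary contribution vanishes: this uses the trivial bound $X(t) \le B(\sigma,y) := \log\prod_{p\le y}(1-p^{-\sigma})^{-1}$, which is finite, so $\Phi_T(\cdot;y)$ is compactly supported from above ($\Phi_T(\tau;y)=0$ for $\tau>B$); and at $-\infty$ the factor $e^{k\tau}\Phi_T(\tau;y)\le e^{k\tau}\to 0$, confirming integrability. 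Third, I would simply cite Proposition \ref{th1} to substitute the asymptotic expansion.

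The only obstacle worth mentioning is making sure the tail control at $+\infty$ is genuine: a priori $X(t)$ could be very large, but since the short Euler product has finitely many factors and $\sigma>\frac12$, the deterministic bound $\sum_{p\le y}\log(1-p^{-\sigma})^{-1}$ truncates the support of $\Phi_T(\cdot;y)$ and legitimizes both the absolute convergence of the Laplace integral and the vanishing boundary term when integration by parts is used. Once this is recorded, the lemma is immediate.
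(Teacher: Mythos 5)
Your proposal is correct and follows the same approach as the paper: both reduce to the Fubini identity $\frac{1}{T}\int_T^{2T}|\zeta(\sigma+\mathrm{i}t;y)|^k\,\mathrm{d}t = \int_{-\infty}^{\infty}k\mathrm{e}^{k\tau}\Phi_T(\tau;y)\,\mathrm{d}\tau$ and then invoke Proposition \ref{th1}. Your additional remarks on the compact support of $\Phi_T(\cdot;y)$ from above (via the deterministic bound on the short Euler product) and the decay at $-\infty$ are a slightly more careful justification of the Fubini--Tonelli step, which the paper takes for granted.
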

\begin{proof}
	Since
	\begin{align*}
		\int_{-\infty}^{\infty} \Phi_T(u; y) k{\rm e}^{ku} {\rm d} u
		& = \frac{1}{T} \int_{-\infty}^{\infty}
		\bigg(\mathop{\int_T^{2T}}_{\log|\zeta(\sigma+{\rm i}t; y)|>u} 1 \d t\bigg) k{\rm e}^{ku} {\rm d} u
		\\
		& = \frac{1}{T} \int_T^{2T}
		\bigg(\int_{-\infty}^{ \log|\zeta(\sigma+{\rm i}t; y)|} k{\rm e}^{ku} \d u\bigg) {\rm d} t
		\\\noalign{\vskip 1mm}
		& = \frac{1}{T} \int_T^{2T}   |\zeta(\sigma+{\rm i}t; y)|)^{k} {\rm d}t,
	\end{align*}
the required result of Lemma \ref{l821} follows from Proposition \ref{th1} immediately.
\end{proof}

\begin{lemma}\label{l822}
Let $\sigma\in (\frac{1}{2}, 1)$ be a fixed constant. 
Let $\{a_n\}_{n\ge0}$ be a sequence of real numbers and $N\ge0$ be an integer. If 
\begin{equation}\label{822}
\tau = \frac{k^{1/\sigma-1}}{\sigma\log k}\sum_{n=0}^{N+1}\frac{a_n}{(\log k)^n}
\qquad
(k\to\infty),
\end{equation}
then there is a sequence of polynomials $\{b_n(\cdot)\}_{n\ge0}$ 
with $\deg(b_n)\le n$ and $b_{0}=\frac{\sigma}{1-\sigma}$ such that 
\begin{equation}\label{822bis}
\log k
= (\log\tau) 
\bigg\{\sum_{n=0}^{N}\frac{b_n(\log_2\tau)}{(\log \tau)^n}
+ O\bigg(\bigg(\frac{\log_2\tau}{\log\tau}\bigg)^{N+1}\bigg)\bigg\},
\end{equation}
where the implied constant depends on the sequence $\{a_n\}_{n\ge0}$ and $N$.
\end{lemma}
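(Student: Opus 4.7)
The plan is to invert the asymptotic expansion \eqref{822} by iterated bootstrapping. Since $\tau\to\infty$ must accompany $k\to\infty$, we may assume $a_0>0$. Taking the logarithm of \eqref{822}, factoring $a_0$ out of the sum, and expanding $\log(1+x)$ by its Taylor series, one obtains
\[
\log\tau
= \frac{1-\sigma}{\sigma}\log k - \log\log k + \gamma_0 + \sum_{n=1}^{N+1}\frac{\gamma_n}{(\log k)^n} + O\bigg(\frac{1}{(\log k)^{N+2}}\bigg),
\]
where $\gamma_0=\log(a_0/\sigma)$ and each $\gamma_n$ is an explicit polynomial in $a_1/a_0,\ldots,a_n/a_0$. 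Rearranging yields the fixed-point identity
\[
\log k
= \frac{\sigma}{1-\sigma}\bigg(\log\tau + \log\log k - \gamma_0 - \sum_{n=1}^{N+1}\frac{\gamma_n}{(\log k)^n}\bigg) + O\bigg(\frac{1}{(\log k)^{N+2}}\bigg).
\]
A first pass shows $\log k\sim\tfrac{\sigma}{1-\sigma}\log\tau$, which forces the constant polynomial $b_0\equiv\sigma/(1-\sigma)$ as claimed.

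I would then prove \eqref{822bis} by induction on $M=0,1,\ldots,N-1$. Writing $L:=\log\tau$ and $L_2:=\log_2\tau$, suppose the approximation
\[
\log k = L\bigg(\sum_{n=0}^{M}\frac{b_n(L_2)}{L^n} + O\bigg(\bigg(\frac{L_2}{L}\bigg)^{M+1}\bigg)\bigg)
\]
has already been established, with $\deg b_n\le n$. Substituting this into the fixed-point identity produces two kinds of expressions. First, $\log\log k = L_2 + \log\bigl(\sum_{n\ge 0}b_n(L_2)/L^n\bigr)$, which is expanded via $\log(1+x)$. Second, $1/(\log k)^n = L^{-n}\bigl(\sum_{m\ge 0}b_m(L_2)/L^m\bigr)^{-n}$, which is expanded via $(1+x)^{-n}$. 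The algebraic heart of the argument is that the class $\mathcal{F}$ of formal series $\sum_n p_n(L_2)/L^n$ whose coefficients are real polynomials satisfying $\deg p_n\le n$ is closed under addition, multiplication, inversion (when $p_0\ne 0$), and under the transformations $\log(1+\cdot)$ and $(1+\cdot)^{-n}$. Reading off the coefficient of $L^{-(M+1)}$ after substitution therefore identifies $b_{M+1}(L_2)$ as a polynomial in $L_2$ of degree $\le M+1$, and propagating the remainder through the iteration gives the claimed $O((L_2/L)^{N+1})$ error in \eqref{822bis}.

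The chief obstacle is the degree-bookkeeping underlying the closure of $\mathcal{F}$. It rests on two elementary observations: the product of two elements of $\mathcal{F}$ has $n$-th coefficient $\sum_{i+j=n}p_iq_j$, of degree at most $i+j=n$; and $\log(1+x)$ and $(1+x)^{-n}$ are power series with constant coefficients, so substituting a series $\sum_{n\ge 1}p_n(L_2)/L^n\in\mathcal{F}$ (with vanishing constant term) into them and applying the product rule preserves the degree constraint. Once this closure property is in hand, the induction is a routine if tedious algebraic manipulation; in particular, each $b_{M+1}$ is a fixed rational expression in $a_0,\ldots,a_{M+1}$ and the previously determined $b_0,\ldots,b_M$, so the polynomials $b_n(\cdot)$ are uniquely determined and could be computed in closed form if desired.
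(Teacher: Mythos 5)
Your proof is correct and follows essentially the same route as the paper's: take logarithms of \eqref{822}, observe the first-order relation $\log k\sim\frac{\sigma}{1-\sigma}\log\tau$ (hence $b_0=\frac{\sigma}{1-\sigma}$), and then bootstrap by induction, substituting the current approximation back into the logarithmic identity and Taylor-expanding. The paper phrases the induction as a recurrence from $m$ to $m+1$ and simply asserts ``by expansion of the log-terms, we can obtain \dots with some polynomials $b_n^*$ of $\deg(b_n^*)\le n$''; your explicit observation that the class of formal series $\sum_n p_n(L_2)/L^n$ with $\deg p_n\le n$ is closed under products, inversion, and composition with $\log(1+\cdot)$ and $(1+\cdot)^{-n}$ is exactly the bookkeeping the paper leaves implicit, and it cleanly justifies the degree bound. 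Your remark that $a_0>0$ may be assumed (since $\tau\to\infty$) is also implicitly used by the paper when it writes $\log\big(\sum_n a_n/(\log k)^n\big)=O(1)$; in the application $a_0=C_0>0$, so this is consistent.
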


\begin{proof}
We prove it by recurrence. Firstly, taking logarithm of both sides in (\ref{822}), we have 
\begin{equation}\label{823}
\log\tau 
= \frac{1-\sigma}{\sigma}\log k-\log\sigma-\log_2k+\log\bigg(\sum_{n=0}^{N+1}\frac{a_n}{(\log k)^n}\bigg).\end{equation}
From this we derive that
$$
\log(\tau\log\tau) =\frac{1-\sigma}{\sigma}\log k+O_{\sigma}(1)
$$
and
\begin{equation}\label{k:tau}
\log k 
= \frac{\sigma}{1-\sigma}\log(\tau\log\tau)+O_{\sigma}(1)
= \frac{\sigma}{1-\sigma}(\log\tau) \bigg(1 + \frac{\log_2\tau+O_{\sigma}(1)}{\log \tau}\bigg),
\end{equation}
which is the case for $N=0$. 

Now assume we already have
$$
\log k
= (\log\tau) \bigg\{\sum_{n=0}^{m}\frac{b_n(\log_2\tau)}{(\log \tau)^n}
+O\bigg(\bigg(\frac{\log_2\tau}{\log\tau}\bigg)^{m+1}\bigg)\bigg\},
$$
for some $m< N$. 
Inserting this into \eqref{823}, it follows that 
\begin{align*}
\log\tau 
& = \frac{1-\sigma}{\sigma}\log k-\log\sigma-\log\bigg((\log\tau)\bigg\{\sum_{n=0}^{m}\frac{b_n(\log_2\tau)}{(\log \tau)^n}+O\bigg(\bigg(\frac{\log_2\tau}{\log\tau}\bigg)^{m+1}\bigg)\bigg\}\bigg)
\\
& \quad 
+ \log\bigg(\sum_{n=0}^{N+1}{a_n}{\bigg((\log\tau)\bigg\{\sum_{n=0}^{m}\frac{b_n(\log_2\tau)}{(\log \tau)^n}
+ O\bigg(\bigg(\frac{\log_2\tau}{\log\tau}\bigg)^{m+1}\bigg)\bigg\}\bigg)^{-n}}\bigg),
\end{align*}
from which we derive that
\begin{align*}
\frac{1-\sigma}{\sigma}\log k 
& = (\log\tau)\bigg\{1
+ \frac{\log\sigma+\log_2\tau}{\log\tau} 
+ \frac{1}{\log\tau}\log\bigg\{\sum_{n=0}^{m}\frac{b_n(\log_2\tau)}{(\log \tau)^n}+ O\bigg(\Big(\frac{\log_2\tau}{\log\tau}\Big)^{m+1}\bigg)\bigg\}
\\
& \quad 
- \frac{1}{\log\tau}\log\bigg(\sum_{n=0}^{N+1}{a_n}{\bigg((\log\tau)\bigg\{\sum_{n=0}^{m}\frac{b_n(\log_2\tau)}{(\log \tau)^n}+O\bigg(\Big(\frac{\log_2\tau}{\log\tau}\Big)^{m+1}\bigg)\bigg\}\bigg)^{-n}}\bigg)\bigg\}.
\end{align*}
By expansion of the log-terms, we can obtain
$$
\frac{1-\sigma}{\sigma}\log k
= (\log\tau) \bigg\{\sum_{n=0}^{m+1} \frac{b_n^*(\log_2\tau)}{(\log \tau)^n}
+ O\bigg(\bigg(\frac{\log_2\tau}{\log\tau}\bigg)^{m+2}\bigg)\bigg\}
$$
with some polynomials $b_n^*(\log_2\tau)$ of $\deg(b_n^*)\le n$ and of $b_0^*=1$.	Thus Lemma \ref{l822} follows from recurrence.
\end{proof}

\subsection{Proof of Proposition \ref{prop4.1}}
Let $\{a_n\}_{n\ge 0}$ be a real sequence depending on $\sigma$, 
which will be chosen later.
It is clear that there is a large constant $t_0=t_0(\sigma)$ such that the function
$$
t\mapsto \frac{t^{1/\sigma-1}}{\sigma\log t}\sum_{n=0}^{N+1}\frac{a_n}{(\log t)^n}
$$
is strictly increasing on $[t_0, \infty)$.
Thus we choose a unique $k$ such that 
\begin{equation}\label{relation:tau=k}
\tau = \frac{k^{1/\sigma-1}}{\sigma\log k}\sum_{n=0}^{N+1}\frac{a_n}{(\log k)^n}\cdot
\end{equation} 
Noticing that \eqref{k:tau} and \eqref{Cond-Prop4.1:Tytau} imply that
$$
ky^{1-\sigma}
\ll_{\sigma} (\tau\log\tau)^{\frac{\sigma}{1-\sigma}} y^{1-\sigma}
\ll_{\sigma}  {\frac{\log T}{y^{1-\sigma}} y^{1-\sigma}}
\le \tfrac{1}{8}(1-\sigma)\log T,
$$
we can apply Lemma \ref{l821} to write
\begin{equation}\label{eq4.3}
\int_{-\infty}^{\infty}\Phi_T(t;y)k{\rm e}^{kt} {\rm d}t
= \exp\bigg(\frac{k^{1/\sigma}}{\log k}\bigg\{\sum_{n=0}^{2N+1}\frac{C_n}{(\log k)^n} 
+ O_{\sigma, N}(R_{2N+2}(k, y))\bigg\}\bigg),
\end{equation}
where 
$$
R_{2N+2}(k, y)
:= \frac1{(\log k)^{2N+2}} + \bigg(\frac {k^{1/\sigma}}{ y}\bigg)^{2\sigma-1}.
$$ 
We choose 
\begin{equation}\label{defepsilon}
\varepsilon 
= A\bigg(\frac{1}{(\log k)^{N+1}} + \bigg(\frac{k^{1/\sigma}}{y}\bigg)^{\sigma-\frac{1}{2}}\bigg)
\in (0, 10^{-2022})
\qquad
(k\ge k_0),
\end{equation} 
where $A=A(\sigma, N)$ and $k_0=k_0(\sigma, N)$ are large constants depending on $(\sigma, N)$, and let 
$$
k_1 := (1+\varepsilon)k,
\quad
k_2 := (1-\varepsilon)k,
\quad
\tau_1 := \Big(1+\frac{\varepsilon}{ 2\sigma}\Big)\tau,
\quad
\tau_2 := \Big(1-\frac{\varepsilon}{2\sigma}\Big)\tau.
$$
When $t\le \tau_2$, we have
$$
kt\le(k-k_2)(\tau_2-t)+kt=(k-k_2)\tau_2+k_2t=\varepsilon k\tau_2+k_2t.$$
Thus
\begin{equation}\label{eq4.5}
\int_{-\infty}^{\tau_2} {\rm e}^{kt}\Phi_T(t;y){\rm d}t
\le {\rm e}^{\varepsilon k\tau_2}\int_{-\infty}^{\infty}{\rm e}^{k_2t}\Phi_T(t;y){\rm d}t.
\end{equation}
Using \eqref{eq4.3} and noticing that $R_{2N+2}(k_2, y)\ll_{\sigma, N} R_{2N+2}(k, y)$, we have
$$
\int_{-\infty}^{\infty} {\rm e}^{k_2t}\Phi_T(t;y){\rm d}t
 = \exp\bigg(\frac{k_2^{1/\sigma}}{\log k_2}
 \bigg\{\sum_{n=0}^{2N+1}\frac{C_n}{(\log k_2)^n}+ O_{\sigma, N}(R_{2N+2}(k, y))\bigg\}\bigg).
$$
Inserting this into (\ref{eq4.5}) and using the definition of $\tau_2$ with \eqref{relation:tau=k}, then we have
\begin{equation}\label{functionf}
\int_{-\infty}^{\tau_2} {\rm e}^{kt}\Phi_T(t;y){\rm d}t
\le \exp\bigg(\frac{k^{1/\sigma}}{\log k}\{\mathcal{S}_1+\mathcal{S}_2 + O_{\sigma, N}(R_{2N+2}(k, y))\}\bigg),
\end{equation}
where
\begin{align*}
\mathcal{S}_1
& := \frac{\varepsilon}{\sigma} \Big(1-\frac{\varepsilon}{2\sigma}\Big)
\sum_{n=0}^{N+1}\frac{a_n}{(\log k)^n},
\\
\mathcal{S}_2
& := \frac{(1-\varepsilon)^{1/\sigma}}{1+\log (1-\varepsilon)/\log k}
\sum_{n=0}^{2N+1}\frac{C_n}{(\log k)^n}\frac{1}{(1+\log(1-\varepsilon)/\log k)^n}\cdot
\end{align*}
The first part $\mathcal{S}_1$ can be calculated easily, using the choice of $\varepsilon$, as
\begin{equation}\label{825}
\mathcal{S}_1
= \frac{\varepsilon}{\sigma}\sum_{n=0}^{N+1}\frac{a_n}{(\log k)^n}
- \frac{\varepsilon^2}{2\sigma^2}a_0 + o_{\sigma, N}(R_{2N+2}(k, y)).
\end{equation}
In order to calculate the second part $\mathcal{S}_2$, we take Taylor series for  {$\log(1-\varepsilon)$}, use the geometric sries formula, and put all infinitesimal of higher order than $R_{2N+2}$ into the error term, then we have
\begin{align*}
\mathcal{S}_2
& = \bigg(1-\frac{\varepsilon}{\sigma}+\frac{\varepsilon^2}{2\sigma}\bigg(\frac{1}{\sigma}-1\bigg)\bigg)
\bigg(1+\frac{\varepsilon}{\log k}\bigg)
\sum_{n=0}^{2N+1}\frac{C_n}{(\log k)^n}{\bigg(1+\frac{n\varepsilon}{\log k}\bigg)}
 + o_{\sigma, N}(R_{2N+2}(k, y))
\\
& = \bigg(1 - \frac{\varepsilon}{\sigma}+\frac{\varepsilon}{\log k}
+ \frac{\varepsilon^2}{2\sigma^2} - \frac{\varepsilon^2}{2\sigma}\bigg)
\sum_{n=0}^{2N+1}\frac{C_n}{(\log k)^n}{\bigg(1+\frac{n\varepsilon}{\log k}\bigg)}
+ o_{\sigma, N}(R_{2N+2}(k, y)).
\end{align*} 
We separate the same part as in the exponent of (\ref{825}) from the above formula, and again put all the infinitesimal of higher order than $R_{2N+2}$ into the error term, then we can write
\begin{equation}\label{eq4.7}
\mathcal{S}_2 
= \sum_{n=0}^{2N+1}\frac{C_n}{(\log k)^n}
+ \frac{\varepsilon^2}{2\sigma^2}C_0-\frac{\varepsilon^2}{2\sigma}C_0
+ -\frac{\varepsilon}{\sigma}
+ \frac{\varepsilon}{\sigma}C_0\sum_{n=1}^{N+1}\frac{\sigma nC_{n-1}-C_n}{(\log k)^n}
+ o_{\sigma, N}(R_{2N+2}(k, y)).
\end{equation}
Combining \eqref{825} and \eqref{eq4.7}, we have
\begin{align*}
\mathcal{S}_1 + \mathcal{S}_2
& = \sum_{n=0}^{2N+1}\frac{C_n}{(\log k)^n}
+ \frac{\varepsilon}{\sigma}\sum_{n=1}^{N+1}\frac{a_n+\sigma nC_{n-1}-C_n}{(\log k)^n}
\\
& \quad
+ \bigg(\frac{\varepsilon}{\sigma}-\frac{\varepsilon^2}{2\sigma^2}\bigg)(a_0-C_0) 
- \frac{\varepsilon^2}{2\sigma}C_0
+ o_{\sigma, N}(R_{2N+2}(k, y)).
\end{align*}
Choosing $a_0=C_0$ and $a_n=C_n-\sigma nC_{n-1}$ for $n\ge1$, we find that
\begin{align*}
\mathcal{S}_1 + \mathcal{S}_2
= \sum_{n=0}^{2N+1}\frac{C_n}{(\log k)^n} -\frac{\varepsilon^2}{2\sigma}C_0 + o_{\sigma, N}(R_{2N+2}(k, y)).
\end{align*}
Inserting this into (\ref{functionf}), and using (\ref{eq4.3}), then we have 
\begin{align*}
\int_{-\infty}^{\tau_2}{\rm e}^{kt} \Phi_T(t;y){\rm d}t
& \le \exp\bigg(\frac{k^{1/\sigma}}{\log k}\bigg\{-\frac{\varepsilon^2}{2\sigma}C_0
+ \sum_{n=0}^{2N+1}\frac{C_n}{(\log k)^n} + O_{\sigma, N}(R_{2N+2}(k, y))\bigg\}\bigg)
\\
& = \exp\bigg(\frac{k^{1/\sigma}}{\log k}\bigg\{-\frac{\varepsilon^2}{2\sigma}C_0 
+ O_{\sigma, N}(R_{2N+2}(k, y))\bigg\}\bigg)
\int_{-\infty}^{\infty}{\rm e}^{kt} \Phi_T(t;y){\rm d}t.
\end{align*}
By the choice of the value of $\varepsilon$ ($A=A(\sigma, N)$ is a suitably large constant), and $C_0>0$, 
we can obtain 
$$
\int_{-\infty}^{\tau_2}{\rm e}^{kt} \Phi_T(t;y){\rm d}t\le\frac{1}{4}\int_{-\infty}^{\infty}{\rm e}^{kt} \Phi_T(t;y){\rm d}t.
$$

Similarly, we have 
$$\int_{\tau_1}^{\infty}{\rm e}^{kt} \Phi_T(t;y){\rm d}t\le\frac{1}{4}\int_{-\infty}^{\infty}{\rm e}^{kt} \Phi_T(t;y){\rm d}t.$$
Thus combining the above two inequalities we have
$$
\frac{1}{2}\int_{-\infty}^{\infty}{\rm e}^{kt} \Phi_T(t;y){\rm d}t
\le \int_{\tau_2}^{\tau_1}{\rm e}^{kt} \Phi_T(t;y){\rm d}t
\le \int_{-\infty}^{\infty}{\rm e}^{kt} \Phi_T(t;y){\rm d}t.
$$
So thanks to \eqref{eq4.3},
we can get the asymptotic formula for the integral over $(\tau_2,\tau_1)$:
\begin{equation}\label{deffrakE}
\int_{\tau_2}^{\tau_1}{\rm e}^{kt} \Phi_T(t;y){\rm d}t
= \exp\bigg(\frac{k^{1/\sigma}}{\log k}\bigg\{\sum_{n=0}^{2N+1}\frac{C_n}{(\log k)^n}
+ O(R_{2N+2}(k, y))\bigg\}\bigg).
\end{equation}
On the other hand, since $\Phi_T(t; y)$ is decreasing in $t$, we have
$$
(\tau_1-\tau_2) {\rm e}^{k\tau_2} \Phi_T(\tau_1;y)
\le \int_{\tau_2}^{\tau_1}{\rm e}^{kt} \Phi_T(t;y){\rm d}t
\le (\tau_1-\tau_2) {\rm e}^{k\tau_1} \Phi_T(\tau_2;y).
$$
By the choice of the values of $\tau_1$ and $\tau_2$, the above inequality is
\begin{equation}\label{ineq4.10}
\frac{\varepsilon\tau}{\sigma}{\rm e}^{k\tau(1-\frac{\varepsilon}{2\sigma})}
\Phi_T((1+\tfrac{\varepsilon}{2\sigma})\tau; y)
\le \int_{\tau_2}^{\tau_1}{\rm e}^{kt} \Phi_T(t;y){\rm d}t
\le \frac{\varepsilon\tau}{\sigma}{\rm e}^{k\tau(1+\frac{\varepsilon}{2\sigma})}
\Phi_T((1-\tfrac{\varepsilon}{2\sigma})\tau; y).
\end{equation}
In view of \eqref{relation:tau=k}, it is easy to see that
\begin{align*}
\frac{\sigma}{\varepsilon\tau}{\rm e}^{-k\tau(1\pm\frac{\varepsilon}{2\sigma})}
& = \exp\bigg(\log\bigg(\frac{\sigma}{\varepsilon\tau}\bigg)-k\tau \{1+O(\varepsilon)\}\bigg)
\\
& = \exp\bigg(-\frac{k^{1/\sigma}}{\sigma\log k}
\bigg\{\sum_{n=0}^{N+1}\frac{a_n}{(\log k)^n}+O(\varepsilon)\bigg\}\bigg).
\end{align*}
Combining this with \eqref{deffrakE} and \eqref{ineq4.10}, it follows that
\begin{align*}
\frac{\sigma}{\varepsilon\tau}{\rm e}^{-k\tau(1\pm\frac{\varepsilon}{2\sigma})}
\int_{\tau_2}^{\tau_1}{\rm e}^{kt} \Phi_T(t;y){\rm d}t
= \exp\bigg(-\frac{k^{1/\sigma}}{\sigma\log k}\bigg\{\sum_{n=0}^{N}\frac{a_n-\sigma C_n}{(\log k)^n}
	+ O(\varepsilon)\bigg\}\bigg).
\end{align*}
Back to \eqref{ineq4.10}, we get
\begin{equation}\label{final:1}
\Phi_T((1+\tfrac{\varepsilon}{2\sigma})\tau; y)
\le \exp\bigg(-\frac{k^{1/\sigma}}{\sigma\log k}\bigg\{\sum_{n=0}^{N}\frac{a_n-\sigma C_n}{(\log k)^n}+ O(\varepsilon)\bigg\}\bigg)
\le \Phi_T((1-\tfrac{\varepsilon}{2\sigma})\tau; y).
\end{equation}
Recall that Lemma \ref{l822} and \eqref{k:tau} give
$$
\log k=\log\tau\bigg\{\sum_{n=0}^{N}\frac{b_n(\log_2\tau)}{(\log \tau)^n}+O\bigg(\bigg(\frac{\log_2\tau}{\log\tau}\bigg)^{N+1}\bigg)\bigg\}
$$
and
$$
\log k 
= \frac{\sigma}{1-\sigma}\log(\tau\log\tau)+O_{\sigma}(1).
$$
With the help of these formulas, after some computations of Taylor's expansions we easily see that
there are a sequence of polynomials $\{\mathfrak{a}_n(\cdot)\}_{n\ge 0}$
\footnote{The value of ${\mathfrak a}_0$ follows easily from $b_{0}=\frac{\sigma}{1-\sigma}$ in Lemma \ref{l822}.} with $\deg(\mathfrak{a}_n)\le n$
and a positive constant $c_0 = c_0(\sigma, N)$ depending on $(\sigma, N)$ such that
\begin{equation}\label{final:2}
\frac{k^{1/\sigma}}{\sigma\log k}\bigg\{\sum_{n=0}^{N}\frac{a_n-\sigma C_n}{(\log k)^n}+ O(\varepsilon)\bigg\}
= (\tau\log^{\sigma}\tau)^{\frac{1}{1-\sigma}}\bigg\{\sum_{n=0}^{N}\frac{\mathfrak{a}_n(\log_2\tau)}{(\log\tau)^n}+O(\varepsilon_0)\bigg\}
\end{equation}
and
\begin{equation}\label{final:3}
\varepsilon/(2\sigma)\le \varepsilon_0,
\end{equation}
where $\varepsilon_0$ is given as in \eqref{def:varepsilon0}.
Inserting \eqref{final:2} into \eqref{final:1} and 
using the fact that the function $t\mapsto \Phi_T(t; y)$ is decreasing with \eqref{final:3},
we obtain the required result.
This completes the proof.   
\hfill$\square$

\subsection{End of the proof of Theorem \ref{th22}}
Let 
$$
\eta
:= c_0\bigg(\frac{(\tau\log\tau)^{\frac{1}{1-\sigma}}}{y}\bigg)^{\sigma-\frac{1}{2}},
$$
where $c_0=c_0(\sigma, N)$ be a large positive constant given as in Proposition \ref{prop4.1}.
Applying Lemma \ref{l145} with $\lambda=\eta\tau$, we can obtain
\begin{equation}\label{EndProof:1}
\Phi_T(\tau) 
= \Phi_T(\tau(1\pm\eta);y)
+ O\bigg(\exp\bigg\{-(4{\rm e})^{-1}(\sigma-\tfrac{1}{2})c_0^2 \frac{\log y}{\log\tau}(\tau\log^{\sigma}\tau)^{\frac{1}{1-\sigma}}\bigg\}\bigg).
\end{equation}
On the other hand, 
noticing that $\eta\le \varepsilon_0$ and that $\Phi_T(t; y)$ is decreasing in $t$,
\eqref{EndProof:1} and Proposition \ref{prop4.1} imply that
\begin{equation}\label{EndProof:2}
\begin{aligned}
\Phi_T(\tau) 
& = \exp\bigg(-(\tau \log^{\sigma}\tau)^{\frac{1}{1-\sigma}}
\bigg\{\sum_{n=0}^{N}\frac{\mathfrak{a}_n(\log_2\tau)}{(\log\tau)^n} + O(\varepsilon_0)\bigg\}
\bigg)
\\
& \quad
+ O\bigg(\exp\bigg\{-(4{\rm e})^{-1}(\sigma-\tfrac{1}{2})c_0^2 \frac{\log y}{\log\tau}(\tau\log^{\sigma}\tau)^{\frac{1}{1-\sigma}}\bigg\}\bigg)
\\
& = \exp\bigg(-(\tau \log^{\sigma}\tau)^{\frac{1}{1-\sigma}}
\bigg\{\sum_{n=0}^{N}\frac{\mathfrak{a}_n(\log_2\tau)}{(\log\tau)^n} + O(\varepsilon_0)\bigg\}
\bigg) \Delta(\tau, y),
\end{aligned}
\end{equation}
uniformly for 
$$
T\to\infty,
\quad
\log T\le y\le(\log T)^2,
\quad 
1\ll\tau\le  { \frac{c(\sigma)}{\log_2T}\bigg(\frac{\log T}{y^{1-\sigma}}\bigg)^{\frac{1-\sigma}{\sigma}}},
$$
where
$$
 \Delta(\tau, y)
 := 1 
 + O\bigg(\exp\bigg\{-(4{\rm e})^{-1}(\sigma-\tfrac{1}{2})c_0^2 \frac{\log y}{\log\tau}(\tau\log^{\sigma}\tau)^{\frac{1}{1-\sigma}} 
 + O_{\sigma, N}\big((\tau\log^{\sigma}\tau)^{\frac{1}{1-\sigma}}\big)
 \bigg\}\bigg).
$$
Since $c_0$ is suitably large, we have, with choice of $y=\log T$,
\begin{align*}
\Delta(\tau, \log T)
& = 1 
+ O\big(\exp\big\{-(8{\rm e})^{-1}(\sigma-\tfrac{1}{2})c_0^2(\tau\log^{\sigma}\tau)^{\frac{1}{1-\sigma}}\big\}\big)
\\
& = \exp\big(\exp\big\{-(8{\rm e})^{-1}(\sigma-\tfrac{1}{2})c_0^2(\tau\log^{\sigma}\tau)^{\frac{1}{1-\sigma}}\big\}\big)
\\
& = \exp\big\{O\big((\tau\log^{\sigma}\tau)^{\frac{1}{1-\sigma}}\varepsilon_0(\tau, \log T)\big)\big\}
\end{align*}
uniformly for 
$T\to\infty$ and $1\ll\tau\le c(\sigma)(\log T)^{1-\sigma}/\log_2T$.
Inserting this into \eqref{EndProof:2}, we obtain the result of Theorem \ref{th22}.
\hfill
$\square$

\vskip 5mm

\textbf{Acknowledgement}.
The author would like to thank professor Jie Wu for his suggestion on exploring this subject, and Masahiro Mine for his valuable remarks on Lemma \ref{l822} and pointing to his work \cite{MM20}. The author is also grateful to Jinjiang Li for helping correct some typos. The author is supported by the China Scholarship Council (CSC) for his study in France. 

\bigskip

\end{document}